\documentclass[11pt]{amsart}
\usepackage{amssymb,amsfonts,amsthm,amsmath,mathrsfs,xspace,hyperref}
\usepackage{graphicx}
\usepackage[francais,english]{babel}
\usepackage{inputenc}
\usepackage[T1]{fontenc}
\usepackage[centering]{geometry}
\usepackage{csquotes}
\usepackage{color}
\usepackage{enumerate}
\usepackage{enumitem}


  \newcommand{\R}{\ensuremath{\mathbb{R}}}%
  \newcommand{\Z}{\ensuremath{\mathbb{Z}}}%
	\newcommand{\Q}{\ensuremath{\mathbb{Q}}}%
  \newcommand{\N}{\ensuremath{\mathbb{N}}}%
	\newcommand{\F}{\ensuremath{\mathcal{F}}}%

				\newcommand{\X}{\ensuremath{\mathcal{X}}}%

        \renewcommand{\H}{\ensuremath{\mathcal{H}}}%
                \newcommand{\K}{\ensuremath{\mathcal{K}}}%

    \newcommand{\alt}{\ensuremath{\operatorname{Alt}}}%

  \newcommand{\sub}{\ensuremath{\operatorname{Sub}}}%
        \newcommand{\psl}{\ensuremath{\operatorname{PSL}}}%
        \renewcommand{\sl}{\ensuremath{\operatorname{SL}}}%

	\newcommand{\urs}{\ensuremath{\operatorname{URS}}}
		\newcommand{\irs}{\ensuremath{\operatorname{IRS}}}
				
	\newcommand{\prob}{\ensuremath{\mathcal{P}}}
	\newcommand{\cl}{\ensuremath{\mathcal{F}}}

\theoremstyle{definition}
  \newtheorem{defin}{Definition}[section]

  \newtheorem{question}[defin]{Question}

\theoremstyle{plain}
  \newtheorem{thm}[defin]{Theorem}
  \newtheorem{main thm}{Theorem}
  \newtheorem{prop}[defin]{Proposition}
    \newtheorem{prop-def}[defin]{Proposition-Definition}

  \newtheorem{cor}[defin]{Corollary}
   \newtheorem{lem}[defin]{Lemma}

\theoremstyle{remark}
  \newtheorem{remark}[defin]{Remark}
    \newtheorem{remarks}[defin]{Remarks}

  \newtheorem{example}[defin]{Example}
  
  \begin{document}

  \date{June 26, 2018}	
\title{Locally compact groups whose ergodic or minimal actions are all free}

\author{Adrien Le Boudec}
\thanks{The authors were partially supported by ANR-14-CE25-0004 GAMME, and ALB was partially supported by ANR-12-BS01-0003-01-GDSous/GSG}
\thanks{This work was completed when ALB was F.R.S.-FNRS Postdoctoral Researcher at UCLouvain.}
\address{CNRS, UMPA - ENS Lyon, 46 all\'ee d'Italie, 69364 Lyon, France}
\email{adrien.le-boudec@ens-lyon.fr}

\author{Nicol\'as Matte Bon}
\address{Departement of Mathematics, ETH Z\"urich, R\"amistrasse 101, 8092 Z\"urich, Switzerland}
\email{nicolas.matte@math.ethz.ch}

\begin{abstract}
We construct locally compact groups with no non-trivial Invariant Random Subgroups and no non-trivial Uniformly Recurrent Subgroups. 
\end{abstract}

\maketitle

\section{Introduction}

Let  $G$ be a locally compact, second countable group. We denote by $\sub(G)$ the space of closed subgroups of $G$, endowed with the Chabauty topology \cite{Chabauty-topo}. The space $\sub(G)$ is compact, and $G$ acts continuously by conjugation on $\sub(G)$. Fixed points for this action are precisely the closed normal subgroups of $G$.

An \emph{Invariant Random Subgroup} (IRS) of $G$ is a probability measure on $\sub(G)$ which is invariant under the action of $G$ \cite{A-G-V}. For instance any closed subgroup $H$ of finite covolume in $G$ (e.g.\ a lattice) gives rise to an IRS by considering the push-forward of an invariant probability measure on $G/H$ by the map $G/H \rightarrow \sub(G)$, $g H \mapsto gHg^{-1}$. The space $\irs(G)$ is closed for the {weak-$\ast$} topology in the space $\mathcal{P}(\sub(G))$ of probability measures on $\sub(G)$, and hence is a compact space. Viewing lattices as elements of this compact space led to recent developments, see e.g.\ \cite{7s-partI,Gel-KM}. More generally if $G$ acts on a probability space $(X, \mu)$ by measure preserving transformations, then the push-forward of $\mu$ through the stabilizer map $X \to \sub(G)$ is an IRS of $G$. In fact, every IRS arises in this way \cite{A-G-V,7s-partI}. 

The notion of IRS admits a natural topological counterpart, namely closed invariant subspaces of $\sub(G)$. We will denote by $\F(\sub(G))$ the compact space of closed subsets of $\sub(G)$, and by $\F(\sub(G))^G$ the subspace of closed invariant subsets. Among these, an important role is played by minimal ones. A minimal closed invariant subset of $\sub(G)$ is called a \emph{Uniformly Recurrent Subgroup} (URS) \cite{Glas-Wei}.  For instance if $H$ is a closed cocompact subgroup of $G$, then the conjugacy class of $H$ is closed in $\sub(G)$, and is therefore a URS. More generally any minimal action of $G$ on a compact space gives rise to a URS, called the stabilizer URS of the action \cite{Glas-Wei}. The converse of this statement is also true, namely that every URS arises as the stabilizer URS of some minimal action, see \cite{MB-Ts-realizing} (for finitely generated groups, this  was also established in \cite{Elek-urs}). 

In this article, we are interested in the following problems:

\begin{question} \label{q-i-no-IRS}
Are there locally compact groups such that every IRS is a convex combination of $\delta_1$ and $\delta_G$ ?
\end{question}

\begin{question} \label{q-i-no-URS}
Are there locally compact groups with no URS other than $\{ \! \{1\} \! \}$ and $\{ G \}$ ?
\end{question}


We will say that $G$ has no non-trivial IRS's if every IRS is a convex combination of $\delta_1$ and $\delta_G$, and that $G$ has no non-trivial URS's if $\{ \! \{1\} \! \}$ and $\left\{G\right\}$ are the only URS's. Thanks to the aforementioned characterisation of IRS's and URS's in terms of stabilizers  \cite{7s-partI, MB-Ts-realizing}, $G$ has no non-trivial IRS's if and only if every non-trivial ergodic probability measure preserving action of $G$ is \emph{essentially free}, i.e. there is a full measure subsets consisting of points with trivial stabilizer; and $G$ has no non-trivial URS's if and only if every non-trivial minimal action on a compact space is \emph{topologically free}, i.e.\ there is a dense $G_\delta$ set of points with trivial stabilizer. 

In connection with Question \ref{q-i-no-URS}, it is interesting to compare locally compact groups with more general Polish topological groups. Recall that there exist Polish groups with no non-trivial minimal action on compact spaces (these are called \emph{extremely amenable}). From this point of view, locally compact groups as in Question \ref{q-i-no-URS} behave like extremely amenable groups in restriction to non-topologically free actions. Recall however that non-trivial locally compact groups are never extremely amenable, as they always admit a free minimal action on a compact space \cite{Ellis-free, Veech}. 

\bigskip

Groups having no non-trivial IRS's or no non-trivial URS's are known to exist among \textit{discrete groups}, and our focus in this paper will be about non-discrete groups, for which Questions \ref{q-i-no-IRS} and \ref{q-i-no-URS} are open. For IRS's this question appears in \cite{7s-partII}, and is discussed in more details in \cite{Gel-notes}. Since any closed normal subgroup is a fixed point for the action of $G$ on $\sub(G)$, any potential candidate for having no non-trivial IRS's or no non-trivial URS's must be topologically simple. A group with no non-trivial IRS's should also fail to admit any lattice. The first examples of compactly generated simple groups not containing any lattice were the Neretin groups \cite{BCGM}. Other examples, arising as groups acting on trees with almost prescribed local action, have been exhibited in \cite{LB-ae}. Recently, coloured versions of the Neretin groups have been introduced and studied in \cite{Lederle}. It is not known whether the families of groups from \cite{BCGM,LB-ae,Lederle} contain instances with no non-trivial IRS's. Note however that they do admit non-trivial URS's (coming from their action on the boundary of the tree associated to them).

The goal of this article is to provide a simple construction of non-discrete groups answering Questions \ref{q-i-no-IRS} and \ref{q-i-no-URS} simultaneously. The examples that we give are not compactly generated, and both questions remain open for non-discrete compactly generated groups (see the discussion below).

\subsection*{Groups of piecewise affine homeomorphisms}


We denote by $\mathrm{PL}(\Q_p)$ the group of \emph{piecewise affine homeomorphisms} of the local field $\Q_p$. These are homeomorphisms $g$ such that there exists partition of $\Q_p$ into compact open subsets $(\mathcal{U}_i)_{i \geq 0}$ such that for all $i \geq 0$ the restriction of $g$ to $\mathcal{U}_i$ coincides with the restriction of a map of the form $x \mapsto a x + b$, with $a \in \Q_p^{*}$ and $b \in \Q_p$ (note that the sets $\mathcal{U}_i$ are not required to be $g$-invariant).  

Recall that two locally compact groups are  \emph{locally isomorphic} if they have isomorphic open subgroups.

\begin{thm} \label{thm-intro}
For every countable product of finite groups $U$, there exists a totally disconnected locally compact group $G \leq \mathrm{PL}(\Q_p)$ that is locally isomorphic to $U$ and such that:
\begin{enumerate}[label=(\roman*)]
 \item \label{item-no-irs} the only IRS's of $G$ are convex combinations of $\delta_1$ and $\delta_G$;
	\item \label{item-no-urs} $G$ has no URS other than $\{ \! \{1\} \! \}$ and $\left\{G\right\}$.
\end{enumerate}
\end{thm}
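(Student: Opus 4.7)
The plan is to construct $G$ explicitly inside $\mathrm{PL}(\Q_p)$ and then to deduce (i) and (ii) by showing that every non-trivial ergodic p.m.p.\ action of $G$ is essentially free and every non-trivial minimal action on a compact space is topologically free; the equivalence with the IRS/URS statements was recalled above. For the construction, write $U = \prod_{n \geq 0} F_n$ and partition $\Q_p^*$ into its annuli $B_n = \{x \in \Q_p : |x|_p = p^{-n}\}$, $n \in \Z$, which are compact open subsets. For each $n \geq 0$, realise $F_n$ faithfully inside $\mathrm{PL}(B_n)$ by splitting $B_n$ into $|F_n|$ subballs of equal $p$-adic radius and letting $F_n$ permute them by $\Q_p$-translations. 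Declaring that elements of $K$ act on $B_n$ (for $n \geq 0$) by some element of $F_n$ and as the identity on $\Q_p \setminus \bigsqcup_{n \geq 0} B_n$, one obtains a compact open subgroup $K \leq \mathrm{PL}(\Q_p)$ isomorphic to $U$, yielding the local isomorphism claim. Set $G = \langle K, A \rangle$, where $A$ is a countable set of affine maps $x \mapsto ax + b$ including the dilation $\sigma : x \mapsto p x$ (which shifts annuli via $B_n \mapsto B_{n+1}$) and translations by a dense subset of $\Q_p$, chosen so that $G$ acts minimally on $\Q_p$ and exhibits the commutator richness exploited below.

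The IRS and URS analysis then proceeds via a dichotomy for subgroups $H \in \sub(G)$: either $H$ fixes an open subset of $\Q_p$ pointwise, or $H$ contains a dense subgroup of the micro-supported subgroup of $G$ (generated by the rigid stabilizers $\rist_G(V)$ over clopen $V \subset \Q_p$). The engine is a Powers/Nekrashevych-style commutation lemma: whenever $h \in H$ sends some clopen $V \subset \Q_p$ disjointly off itself, the commutator $[h,k]$ lies in $H \cap \rist_G(V \cup h(V))$ and is non-trivial for any non-trivial $k \in \rist_G(V)$. Iterating this with the dilation $\sigma$, which transports rigid stabilizers to arbitrarily small scales, together with the affine mixing of the annuli, one shows that $H$ is either trivial (if it fixes some open set pointwise, using faithfulness of $G \acts \Q_p$) or else contains a dense subgroup of the micro-supported part, which can in turn be shown to be dense in $G$, forcing $H = G$ since $H$ is closed. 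Minimality of a URS or ergodicity of the p.m.p.\ action realising an IRS then constrains the support of $\mathcal H$ to $\{\{1\}, G\}$, yielding (i) and (ii).

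The main obstacle is reconciling the \emph{product} structure of $K = \prod F_n$ with sufficient dynamical richness of $G$. In tree-automorphism constructions, compact open subgroups are naturally iterated wreath products, which conflicts with a genuine direct product local type; the $\mathrm{PL}(\Q_p)$ setting sidesteps this obstruction by letting the factors $F_n$ act on pairwise disjoint annuli. The delicate point is choosing $A$ small enough that the affine conjugates $\sigma^n K \sigma^{-n}$ and their interactions with $K$ do not enlarge the compact open part into something with a different isomorphism type, yet large enough for the scaling-and-commutation mechanism above to succeed and for the ambient simplicity properties of $G$ to be valid. This balance between ``localising'' $K$ on the annuli and ``globalising'' $G$ by affine maps is where I expect the main technical work to lie.
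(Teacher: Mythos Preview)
Your proposal has a genuine gap at the heart of the argument: the dichotomy you state is false, and the conclusion you draw from one branch is also false.

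First, the claim that ``if $H$ fixes some open set pointwise then $H$ is trivial, by faithfulness of $G\acts\Q_p$'' is simply wrong. Faithfulness means the \emph{global} kernel is trivial; it says nothing about subgroups supported on proper clopen sets. Indeed the rigid stabilizers $\rist_G(V)$ you invoke are themselves non-trivial closed subgroups fixing an open set pointwise. Second, the dichotomy ``either $H$ fixes an open set pointwise or $H$ contains a dense subgroup'' fails as well: for any $x\in\Q_p$ the germ stabilizer $G_x^0$ (elements trivial on a neighbourhood of $x$) fixes no open set pointwise once the action is micro-supported and minimal, yet it is a proper closed subgroup. In fact the collection $\{G_x^0\}_x$ is precisely the stabilizer URS attached to the action, so your mechanism cannot rule out this URS. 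The Powers/Nekrashevych commutation trick produces elements of $H\cap\rist_G(V)$, but it does \emph{not} by itself push $H$ up to a dense subgroup; in the Thompson-like setting it only shows that a URS other than $\{1\}$ must dominate the stabilizer URS.

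The paper circumvents this obstacle by an entirely different route. It does not attempt a direct dichotomy on $\sub(G)$. Instead it exhibits a \emph{bi-approximation} $(G_n,U_n)$ with $U_n\trianglelefteq G_n$ open, $G_n/U_n\cong\Lambda_p'$ a fixed discrete simple group of Thompson type, and then reduces the problem to known facts about $\Lambda_p'$: it has no non-trivial IRS (Dudko--Medynets) and exactly one non-trivial URS, the stabilizer URS from its action on $\Z_p$ (via results of \cite{LBMB-subdyn}). The crucial extra step---absent from your sketch---is a Chabauty limiting argument: as $n\to\infty$, the pullbacks to $G_n$ of that unique non-trivial URS converge to $\{G\}$ in $\cl(\sub(G))$, because one can choose base points $x_n$ escaping to infinity so that $(G_n)_{x_n}^0\to G$. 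Thus every limit of $U_n$-saturated closed invariant subsets contains either $\{1\}$ or $G$, and a general criterion (Theorem~\ref{thm:no-urs-limit}) finishes the proof. Your global affine/commutator scheme does not see this ``escape to infinity'' phenomenon, which is exactly what kills the stabilizer URS in the limit.
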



We refer to \S \ref{subsec:LC-PL} for an explicit description of groups as in Theorem \ref{thm-intro}. In connection with the previous analogy with extremely amenable Polish groups, we point out that these examples are not amenable.

All the locally compact groups $G$ that we construct share a common dense countable subgroup $\Lambda$, such that the groups $G$ are Schlichting completions of $\Lambda$ \cite{schlichting}. For the proof of the absence of IRS's in $G$, we invoke results of Dudko--Medynets \cite{Dud-Med} implying that the \textit{discrete group} $\Lambda$ has no non-trivial IRS's. Using simple approximation arguments, we then deduce the absence of IRS's for the non-discrete group $G$. The proof of the absence of URS's follows the same strategy, by appealing to results from \cite{LBMB-subdyn} concerning discrete groups. However we point out that in general results on IRS's or URS's of a countable group do \textit{not} pass to its completions. This is for instance illustrated by the example of $\mathrm{PSL}(3,\Q)$, which has no non-trivial IRS's (see Section \ref{s:more}), but whose completions $\mathrm{PSL}(3,\Q_p)$ do have non-trivial IRS's; for instance lattices. The main point of this paper is to provide a construction for which results on IRS's and URS's of discrete groups can be profitably used in order to study IRS's and URS's of non-discrete groups.


This construction admits some variations. In Section \ref{s:more} we discuss two of them, arising as completions of the finitary alternating group $\alt_f(\Z)$, and of the group of infinite matrices $\sl(\infty , \Q)$. These provide other examples answering (separately) Questions \ref{q-i-no-URS} and  \ref{q-i-no-IRS}.


\subsection*{About compact generation} \label{s-non-cg}


It is inherent to our argument that the examples of groups from Theorem \ref{thm-intro} that we construct are not compactly generated. While candidates to answer Question \ref{q-i-no-IRS} for compactly generated groups are available \cite{BCGM,LB-ae}, the situation is rather different for URS's. A compactly generated non-discrete group with no non-trivial URS's should be totally disconnected and topologically simple, i.e.\ should belong to the class $\mathcal{S}$ studied by Caprace--Reid--Willis \cite{CRW-partI,CRW-partII}. We do not know any groups in $\mathcal{S}$ that are candidates for answering Question \ref{q-i-no-URS}. We refer to Appendix A from \cite{CRW-partII} for a list of currently known sources of examples of groups in $\mathcal{S}$, and to \cite{Capr-proc} for a survey about recent developments.

An intermediate problem is whether there exist groups without non-trivial URS's which are non-elementary in the sense of Wesolek \cite{Wes-elem}.

\subsection*{Organization}

The article is organized as follows. In the next section we introduce notation and recall basic facts about the Chabauty topology. In Section \ref{s:as-limits} we consider an approximation process of a locally compact group, and explain how the study of IRS's and URS's of the ambient group can be reduced to the study of certain IRS's and URS's of the approximating subgroups. We apply these results in Section \ref{s:PL}, and construct the groups from Theorem \ref{thm-intro}. Finally in Section \ref{s:more} we give additional examples of non-discrete groups having no non-tivial IRS's or URS's.

\subsection*{Acknowledgments}

We are grateful to Tsachik Gelander for suggesting the problem of the existence of locally compact groups without IRS's. We thank Uri Bader, Pierre-Emmanuel Caprace, Tsachik Gelander, Jean Raimbault for interesting discussions. We also thank Pierre-Emmanuel Caprace and Phillip Wesolek for drawing our attention to the construction from \cite{Willis-co} and \cite{A-G-W}, a variant of which is considered in \S \ref{subsec:permutations}. Finally we thank an anonymous referee for remarks and corrections.

We thank the Isaac Newton Institute for Mathematical Sciences, Cambridge, for hospitality during the programme \textit{Non-positive curvature group actions and cohomology}, where part of this work was undertaken.

\section{Preliminaries}



In this article all locally compact groups $G$ are assumed to be second countable. We denote by $\sub(G)$ the set of closed subgroups of $G$, endowed with the Chabauty topology. Recall that a pre-basis of open sets for this topology is given by sets of the form
\[\mathcal{U}_V=\{H\in \sub(G)\colon H\cap V\neq \varnothing\} \quad \mathcal{O}_C=\{H\in \sub(G) \colon H\cap C=\varnothing\},\]
where $V$ is a relatively compact open subset of $G$, and $C$ a compact subset of $G$. The space $\sub(G)$ is compact and metrisable (as we assume $G$ second countable). The convergence in $\sub(G)$ can be characterized as follows: a sequence $(H_n)$ converges to $H$ in $\sub(G)$ if and only if every $h\in H$ is the limit of a sequence $(h_n)$ where $h_n\in H_n$ for every $n$, and conversely every cluster point of such a sequence belongs to $H$.


The following facts are well-known (see e.g.~\cite{Schochetman} for proofs).

\begin{lem} \label{lem-chab-prelim}
Let $G$ be a locally compact group.
\begin{enumerate}[label=(\roman*)]
\item \label{i:inclusion} Let $H$ be a closed subgroup of $G$. Then the inclusion $\sub(H)\to \sub(G)$ is continuous.
\item Let $N$ be a closed normal subgroup of $G$, and $\pi: G \rightarrow G/N$ the canonical projection. Then the map $\sub(G/N) \rightarrow \sub(G)$, $H \mapsto \pi^{-1}(H)$, is continuous. If moreover $N$ is compact, $\sub(G) \rightarrow \sub(G/N)$, $H \mapsto \pi(H)$, is also continuous.
\item \label{i:trunc} If $O\le G$ is open, the intersection map $\sub(G)\to \sub(G), \, H\mapsto H\cap O$, is continuous.
\item If $(H_n)$ is a decreasing sequence of closed subgroups, then $(H_n)$ converges to $\cap H_n$.
\item If $(H_n)$ is increasing, then $(H_n)$ converges to $\overline{\cup H_n}$.
\end{enumerate}
\end{lem}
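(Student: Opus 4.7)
The plan is to deduce each part from the sequential characterisation of Chabauty convergence recalled just above the lemma: $H_n\to H$ iff every $h\in H$ is the limit of some sequence $h_n\in H_n$, and every cluster point of a sequence $h_n\in H_n$ belongs to $H$. For (i)--(iii), it is often cleaner to check continuity directly on the sub-basis $\{\mathcal U_V,\mathcal O_C\}$, and I would switch between the two criteria depending on which is handier.

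For (i), the preimages of the sub-basic sets $\mathcal U_V$ and $\mathcal O_C$ under the inclusion $\sub(H)\hookrightarrow\sub(G)$ are $\mathcal U_{V\cap H}$ and $\mathcal O_{C\cap H}$; since $V\cap H$ is relatively compact open in $H$ and $C\cap H$ is compact in $H$, these are sub-basic open sets of $\sub(H)$. For the first half of (ii), continuity of $H\mapsto \pi^{-1}(H)$ is again a sub-basis check: $\pi^{-1}(H)\cap V\ne\varnothing$ iff $H\cap \pi(V)\ne\varnothing$, and $\pi$ sends relatively compact open sets to relatively compact open sets and compact sets to compact sets.

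For the second half of (ii), where $N$ is compact, I would apply the sequential criterion. Given $H_n\to H$, the ``approximation'' direction is immediate: for $\bar h\in\pi(H)$, lift to $h\in H$, take $h_n\in H_n$ with $h_n\to h$, and project. The ``cluster point'' direction is where compactness of $N$ enters and is the main obstacle: given a cluster point $\bar g$ of some $(\pi(g_n))$ with $g_n\in H_n$, I would use compactness of $N$ to adjust each $g_n$ by an element $\nu_n\in N$ so that a subsequence of $(g_n\nu_n)$ converges in $G$; the limit lies in $H$ by Chabauty convergence of $H_n\to H$, and projects to $\bar g$. Without compactness of $N$ the statement genuinely fails (e.g.\ $G=\R^2$, $N=\R\times\{0\}$, $H_n=\R\cdot(1,1/n)$: then $H_n\to N$ while the projections to $G/N\simeq\R$ are constantly equal to $\R$).

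For (iii), it is convenient to recall that an open subgroup of $G$ is also closed. If $H_n\to H$ and $h\in H\cap O$, Chabauty convergence gives $h_n\in H_n$ with $h_n\to h$, and $h_n\in O$ eventually by openness; conversely any cluster point of $h_n\in H_n\cap O$ lies in $H$ by convergence and in $O$ by closedness, hence in $H\cap O$. For (iv), any cluster point of $h_{n_k}\in H_{n_k}$ lies in every $H_m$ (for $n_k\geq m$ the tail is contained in the closed subgroup $H_m$), hence in $\bigcap_n H_n$; conversely, each element of $\bigcap_n H_n$ is the limit of a constant sequence. For (v), cluster points of arbitrary sequences $h_n\in H_n$ obviously lie in $\overline{\bigcup_n H_n}$, and the only substantive point is to produce, for each $h\in\overline{\bigcup_n H_n}$, a sequence $h_n\in H_n$ with $h_n\to h$; this is a standard diagonal argument, approximating $h$ by $g_k\in H_{m_k}$ with $m_k\uparrow\infty$ and setting $h_n=g_{k(n)}$ for $k(n)$ the largest index with $m_{k(n)}\leq n$, using the monotonicity $H_{m_{k(n)}}\subseteq H_n$.
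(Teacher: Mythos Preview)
The paper does not give its own proof of this lemma: it simply records the facts as well-known and refers to \cite{Schochetman}. So there is nothing to compare against, and your sketch stands on its own. It is essentially correct and follows the standard route via the sequential characterisation and the sub-basis $\{\mathcal{U}_V,\mathcal{O}_C\}$.

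One point in part~(ii), second half, is not quite right as written. You adjust $g_n$ by $\nu_n\in N$ so that a subsequence of $g_n\nu_n$ converges, and then say ``the limit lies in $H$ by Chabauty convergence of $H_n\to H$''. But $g_n\nu_n$ need not belong to $H_n$ (there is no reason for $N\le H_n$), so Chabauty convergence does not apply directly to this sequence. The fix is immediate: by compactness of $N$, pass to a further subsequence so that $\nu_n\to\nu\in N$; then $g_n=(g_n\nu_n)\nu_n^{-1}$ converges as well, its limit lies in $H$ by Chabauty convergence, and projects to $\bar g$. Alternatively, and more cleanly, observe that when $N$ is compact the quotient map $\pi$ is proper (if $W$ is a compact neighbourhood of $1$ in $G$ then $\pi^{-1}(\pi(W))\subset WN$ is compact), so the original sequence $g_n$ already lies in a compact set once $\pi(g_n)$ is close to $\bar g$; no adjustment is needed.

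The remaining parts (i), (iii), (iv), (v) are handled correctly. Your counterexample for (ii) without compactness of $N$ is a nice touch.
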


Let $X$ be a compact metrisable space. A map $\varphi\colon X\to \sub(G)$ is \emph{upper semicontinuous} if for every sequence $(x_n)\subset X$ converging to a limit $x\in X$, every cluster point $K$ of $(\varphi(x_n))$ in $\sub(G)$ verifies $K\le \varphi(x)$. We recall the following classical fact.

\begin{lem}\label{l-semicontinuous-measurable}
An upper semicontinuous map $\varphi \colon X\to \sub(G)$ is measurable for the Borel $\sigma$-algebras on $X$ and on $\sub(G)$. 
\end{lem}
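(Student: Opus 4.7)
The plan is to reduce Borel measurability to showing that the preimage under $\varphi$ of every element of the Chabauty pre-basis recalled at the beginning of the section is Borel in $X$. Since this pre-basis generates the topology, and hence the Borel $\sigma$-algebra of $\sub(G)$, it is enough to handle separately the two families $\mathcal{O}_C$ (for $C\subseteq G$ compact) and $\mathcal{U}_V$ (for $V\subseteq G$ relatively compact open).

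The key observation I would establish is the following claim: for every compact $C\subseteq G$, the set
\[
A_C \;:=\; \{x\in X : \varphi(x)\cap C\neq\varnothing\}
\]
is closed in $X$. Given $x_n\to x$ in $X$ with $x_n\in A_C$, choose $g_n\in\varphi(x_n)\cap C$ and extract, using compactness of $C$ and of $\sub(G)$, a subsequence along which $g_n\to g\in C$ and $\varphi(x_n)\to K$ in $\sub(G)$. The characterization of Chabauty convergence stated just before Lemma~\ref{lem-chab-prelim} forces $g\in K$ (it is a cluster point of a sequence $g_n\in\varphi(x_n)$), while upper semicontinuity of $\varphi$ gives $K\le\varphi(x)$. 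Hence $g\in\varphi(x)\cap C$, so $x\in A_C$. This immediately deals with $\mathcal{O}_C$, since $\varphi^{-1}(\mathcal{O}_C)=X\setminus A_C$ is then open.

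For $\mathcal{U}_V$ with $V$ relatively compact open, I would invoke second countability of $G$ to decompose $V=\bigcup_n K_n$ as a countable union of compact sets; for instance, fix a countable basis $(B_n)$ of $G$ made of relatively compact open sets, retain those indices with $\overline{B_n}\subseteq V$, and set $K_n=\overline{B_n}$. Since $H\cap V\neq\varnothing$ iff $H\cap K_n\neq\varnothing$ for some $n$, one obtains
\[
\varphi^{-1}(\mathcal{U}_V) \;=\; \bigcup_{n}A_{K_n},
\]
which is $F_\sigma$ in $X$ by the claim, and in particular Borel. This completes the verification.

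I do not expect any serious obstacle here: the whole argument collapses to the compactness extraction above, and second countability of $G$ enters only to write open sets as countable unions of compact ones. The sole subtlety to watch for is that upper semicontinuity only controls inclusion of cluster points inside $\varphi(x)$, which is precisely the weak direction one needs in order to pass from $g_n\in\varphi(x_n)$ to $g\in\varphi(x)$.
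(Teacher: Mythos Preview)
Your proof is correct and follows essentially the same approach as the paper's: both show that $A_C=\{x:\varphi(x)\cap C\neq\varnothing\}$ is closed via the compactness-plus-upper-semicontinuity extraction, and then handle $\mathcal{U}_V$ by writing $V$ as a countable union of compact sets using second countability. The only cosmetic difference is that you explicitly extract a convergent subsequence of $(\varphi(x_n))$ in $\sub(G)$, whereas the paper phrases the same step as ``every cluster point $K$ of $\varphi(x_n)$ contains $g$''.
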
 

\begin{proof}
It is enough to show that $\varphi^{-1}(\mathcal{U}_V)$ and $\varphi^{-1}(\mathcal{O}_C)$ are measurable whenever $V\subset G$ is open and $C\subset G$ is compact. First, we claim that $\varphi^{-1}(\mathcal{O}_C)$ is  actually open, or equivalently that $\varphi^{-1}(\mathcal{O}_C^c)=\{x\in X\colon \varphi(x) \cap C\neq \varnothing\}$ is closed. Let $(x_n)\subset \varphi^{-1}(\mathcal{O}_C^c)$ be a sequence converging to some  $x\in X$. Choose a sequence $g_n\in\varphi(x_n)\cap C$. Upon passing to a subsequence, $(g_n)$ converges to $g\in C$. Then every cluster point $K$ of $\varphi(x_n)$ contains $g$, and by upper semicontinuity we get that $g\in \varphi(x)$, proving that $x\in \varphi^{-1}(\mathcal{O}_C^c)$.

Now let $V\subset G$ be an open set. Since $G$ is locally compact and second countable we may write $V$ as the countable union of compact subsets $C_n$, and we have $\mathcal{U}_V=\bigcup \mathcal{O}_{C_n}^c$, showing that $\varphi^{-1}(\mathcal{U}_V)$ is also Borel. \qedhere
\end{proof}

If $X$ is a compact space we denote by $\prob(X)$ the convex space of all probability measures on $X$, endowed with the weak-* topology, and by $\cl(X)$ the set of closed subsets of $X$, endowed with the Hausdorff topology. Recall that $\prob(X)$ and $\cl(X)$ are metrizable whenever $X$ is metrizable. 


Statement \ref{i:inclusion} of Lemma \ref{lem-chab-prelim} implies that whenever $H\le G$ is a closed subgroup, the inclusion  $\sub(H)\subset \sub(G)$ induces closed inclusions $\prob(\sub(H)) \subset \prob(\sub(G))$ and $\cl(\sub(H)) \subset \cl(\sub(G))$. In the sequel we will always use these identifications without further mention.

The conjugation action of $G$ on $\sub(G)$ induces continuous actions on $\prob(\sub(G))$ and $\cl(\sub(G))$. We denote by $\irs(G) \subset \prob(\sub(G))$ the space of $G$-invariant probability measures. It is a closed subspace of $\prob(\sub(G))$, and hence a compact space for the induced topology. Similarly we let $\cl(\sub(G))^G\subset \cl(\sub(G))$ be the set of $G$-invariant closed subsets of $\sub(G)$. Again $\cl(\sub(G))^G$ is compact for the induced topology. We denote by $\urs(G) \subset \cl(\sub(G))^G$ the uniformly recurrent subgroups, i.e.\ minimal $G$-invariant closed subsets of $\sub(G)$. Note that in general $\urs(G)$ is \emph{not} closed  in $\cl(\sub(G))$ (this is for instance the case for the lamplighter group, as can be seen from the analysis of URS's of wreath products in  \cite[\S 3]{Glas-Wei}).


We will say for short that a group $G$ has \emph{no non-trivial IRS's} if the only IRS's of $G$ are convex combinations of $\delta_1$ and $\delta_G$, and that $G$ has \emph{no non-trivial URS's} if its only URS's are $\{1\}$ and $G$.

\section{Approximations in the Chabauty space} \label{s:as-limits}

\subsection{Bi-approximations}

The approximation process considered in Sections \ref{s:PL} and \ref{s:more} consists of a locally compact group $G$ which can be written as an ascending union of open subgroups $G_n$, and which contains a decreasing sequence $(U_n)$ such that $U_n$ is a compact open normal subgroup of $G_n$. In this section we work in a slightly more general setting (in which the subgroups $U_n$ need neither be open nor normal in $G_n$), and explain how the study of IRS's and URS's of the group $G$ can be reduced to the study of IRS's and URS's of $G_n$ \enquote{relatively to} the coset space $G_n/U_n$ (in a sense made precise below).




\begin{defin} \label{def-bi-app}
Let $G$ be a locally compact group. A \emph{bi-approximation}  for $G$ is a sequence $(G_n, U_n)$ of pairs of closed subgroups $U_n \leq G_n \leq G$ such that:
\begin{enumerate}[label=(\roman*)]
\item $(G_n)$ is an increasing sequence of open subgroups such that $G=\cup G_n$;
\item $(U_n)$ converges to $\{1\}$ in $\sub(G)$, and $\cup U_n$ is relatively compact in $G$. 
\end{enumerate}
\end{defin}

\begin{remark} \label{rmq-case-Un-monot}
When the sequence of compact subgroups $(U_n)$ is decreasing and $\cap U_n = 1$; then the second condition of Definition \ref{def-bi-app} is satisfied (see Lemma \ref{lem-chab-prelim}).
\end{remark}


Yet another way to formulate the condition on the sequence $(U_n)$ is the following:


\begin{lem} \label{lem-reformul-U}
Let $(U_n)$ be sequence of compact subgroups that is relatively compact in $G$. The following are equivalent:
\begin{enumerate}[label=(\roman*)]
\item $(U_n)$ converges to $\{1\}$ in $\sub(G)$;
\item every sequence $(u_n)$ with $u_n\in U_n$ converges to $1$ in $G$.
\end{enumerate}
\end{lem}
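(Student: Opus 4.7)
I would prove both implications directly from the sequential characterization of Chabauty convergence recalled in Section 2: a sequence $(H_n)$ converges to $H$ in $\sub(G)$ if and only if (a) every $h\in H$ is the limit of a sequence $(h_n)$ with $h_n\in H_n$, and (b) every cluster point of any such sequence $(h_n)$ lies in $H$. Applied to the target $H=\{1\}$, condition (a) is automatic (the constant sequence $1\in U_n$ works), so the content of (i) reduces to: every cluster point in $G$ of a sequence $(h_n)$ with $h_n\in U_n$ equals $1$. The task is then to compare this to the stronger statement (ii) that every such sequence actually converges to $1$. Relative compactness of $\bigcup U_n$ is precisely what bridges the gap.

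\textbf{(ii) $\Rightarrow$ (i).} Condition (a) holds trivially. For (b), if $(h_n)$ is any sequence with $h_n\in U_n$, then by hypothesis $h_n\to 1$ in $G$, so $1$ is its unique cluster point and in particular every cluster point lies in $\{1\}$.

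\textbf{(i) $\Rightarrow$ (ii).} Let $(u_n)$ be a sequence with $u_n\in U_n$. Since $\bigcup U_n$ is relatively compact, all terms lie in a fixed compact subset $K\subset G$. Suppose for contradiction that $u_n\not\to 1$; then there is an open neighborhood $V$ of $1$ and a subsequence $(u_{n_k})$ lying in $K\setminus V$. By compactness of $K\setminus V$, a further subsequence converges to some $u\in K\setminus V$, in particular $u\ne 1$. But $u$ is then a cluster point of $(u_n)$, contradicting the Chabauty convergence $U_n\to\{1\}$ via condition (b).

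\textbf{Main point.} There is no real obstacle: the lemma is a translation between the two notions of convergence. The only subtlety worth noting is the role of the relative compactness assumption on $\bigcup U_n$. Without it, one could have elements $u_n\in U_n$ escaping every compact set, which is compatible with $U_n\to\{1\}$ in the Chabauty topology (no cluster points means condition (b) is vacuous on that sequence) but violates (ii). Relative compactness forces any non-convergence of $(u_n)$ to produce a genuine cluster point in $G$, which then contradicts (i).
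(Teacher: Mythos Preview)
Your proof is correct and follows the same approach as the paper: both directions use the sequential characterization of Chabauty convergence, and for (i)$\Rightarrow$(ii) you use relative compactness to show that $1$ being the unique cluster point forces convergence to $1$. The paper's proof is more terse (it simply notes that a relatively compact sequence with $1$ as unique cluster point converges to $1$, and calls the converse ``clear''), but the argument is the same.
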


\begin{proof}
If $(U_n)$ converges to $\{1\}$ and $u_n\in U_n$ for all $n$, then any cluster point of $(u_n)$ should belong to the limit of $(U_n)$, and hence is trivial. Therefore the relatively compact sequence $(u_n)$ admits $1$ as unique cluster point, and hence converges to $1$. The converse implication is clear.
\end{proof}

Before moving further, let us indicate that we do not suppose $(G_n)$ to be strictly increasing in Definition \ref{def-bi-app}, so that all results of this section hold true in the extreme case $G_n=G$ for all $n$. Similarly the $U_n$ are not necessarily pairwise distinct; and $U_n=\{1\}$ for all $n$ is also allowed.

\subsection{Truncation and saturation maps}

We will need the following terminology.

\begin{defin}
Let $G$ be a group and $U\le G$ be a subgroup. For a subgroup $H\le G$, the $U$-\emph{saturation} of $H$ in $G$, denoted $[H]^G_{U}$, is defined as the subgroup of $G$ consisting  of all elements whose action on $G/U$  preserves every $H$-orbit. \end{defin}

It is clear from this definition that $[H]_U^G$ is a subgroup of $G$. The following lemma provides a more concrete description of $[H]_U^G$ and records some of its basic properties.
\begin{lem} \label{l-saturation-basic}
Let $G$ be a group and $U, H\le G$ be subgroups.
\begin{enumerate}[label=(\roman*)]
\item \label{i-saturation-formula} The $U$-saturation of $H$ is given by the formula:
 \begin{equation} \label{e:intersection}   [H]_U^G=\bigcap_{g\in G} H gUg^{-1}. \end{equation} 
\item \label{i-double-saturation} We have {$[[H]^G_{U}]^G_{U}=[H]_U^G$}. 
\item \label{i-saturation-normal} If $U$ is normal in $G$, then $[H]_U^G=HU$.
\end{enumerate}
\end{lem}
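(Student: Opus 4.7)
The plan is to observe that everything follows by unwinding the definition and using formula \eqref{e:intersection} as the main computational tool. I will establish (i) first, then deduce (iii) immediately, and finally prove (ii) via a clean orbit-identification argument that avoids any re-saturation computation.

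First I would prove \ref{i-saturation-formula}. By definition, $k \in [H]_U^G$ means that, for every coset $gU \in G/U$, one has $k \cdot (gU) \in H \cdot (gU)$; equivalently, $kgU = hgU$ for some $h \in H$, that is, $kg \in HgU$, i.e., $k \in HgUg^{-1}$. Taking the intersection over $g \in G$ gives the formula. In particular, this also shows that $[H]_U^G$ is indeed a subgroup: it is an intersection of left cosets of subgroups of the form $gUg^{-1}$, but since $H \cdot 1 = H \subseteq [H]_U^G$ and the defining condition is closed under composition (a direct check shows that if $k_1,k_2$ each send every orbit into itself, so does $k_1k_2$), and a left-multiplication bijection of $G/U$ that sends every $H$-orbit into itself must permute each orbit (hence so must $k^{-1}$) --- this is true because $G/U$ is the disjoint union of the $H$-orbits, and a bijection of $G/U$ that preserves the cells of a partition setwise must be a bijection on each cell. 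Then \ref{i-saturation-normal} is immediate: if $U \trianglelefteq G$ then $gUg^{-1} = U$ for every $g$, so every intersectand in \eqref{e:intersection} equals $HU$, and $[H]_U^G = HU$.

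For \ref{i-double-saturation}, the key observation is that \emph{the $H$-orbits and the $[H]_U^G$-orbits on $G/U$ coincide}. On the one hand, $H \le [H]_U^G$, so each $H$-orbit is contained in the corresponding $[H]_U^G$-orbit. On the other hand, by the very definition of $[H]_U^G$, every element of $[H]_U^G$ maps a coset $gU$ into the $H$-orbit of $gU$; hence the $[H]_U^G$-orbit of $gU$ is contained in the $H$-orbit of $gU$. Since the $H$-orbits and the $[H]_U^G$-orbits are the same partition of $G/U$, the condition ``preserves every $H$-orbit'' and the condition ``preserves every $[H]_U^G$-orbit'' define the same subset of $G$, which is exactly the statement $[[H]_U^G]_U^G = [H]_U^G$.

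Essentially all the content is definitional, and I do not anticipate any real obstacle; the only mild subtlety is verifying that ``preserves each orbit'' really defines a subgroup (which one can either check directly via the bijection argument above, or sidestep entirely by \emph{defining} $[H]_U^G$ by the formula in (i) and then verifying the orbit interpretation).
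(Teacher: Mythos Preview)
Your proof is correct and follows essentially the same route as the paper: unwind the orbit-preservation definition to get the formula \ref{i-saturation-formula}, deduce \ref{i-saturation-normal} directly from it, and establish \ref{i-double-saturation} via the observation that $H$ and $[H]_U^G$ have the same orbits on $G/U$. The digression on why $[H]_U^G$ is a subgroup is unnecessary here (the paper records this as clear from the definition before the lemma), but the substantive arguments are the same.
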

\begin{proof}
Let $k\in [H]_U^G$ and let $g \in G$. Since $k$ preserves the $H$-orbit of the coset $gU$, we deduce that there exists $h\in H$ such that $kgU=hgU$,  from which it  follows that $k\in HgUg^{-1}$. Hence $ [H]_U^G\subset \bigcap_{g\in G} H gUg^{-1}$, since $k\in[H]_U^G$ and $g\in G$ are arbitrary. Conversely, for every $g\in G$ the subset $HgUg^{-1}$ preserves the $H$-orbit of $gU$. Therefore every element of $\bigcap_{g\in G} H gUg^{-1}$ preserves every $H$-orbit in $G/U$, i.e. $\bigcap_{g\in G} H gUg^{-1}\subset [H]_U^G$. This proves \ref{i-saturation-formula}.

 Observe that $H\le [H]_U^G$, and thus the  $[H]_U^G$-orbits in $G/U$ coincide with the $H$-orbits. This readily implies \ref{i-double-saturation}. Finally, \ref{i-saturation-normal} follows from \ref{i-saturation-formula}. \qedhere

\end{proof}

We will say that $H$ is $U$-\emph{saturated} if $[H]^G_{U}=H$ (note that this depends not only on $U$ but also on the ambient group $G$). When $U$ is clear from the context, we will simply say that $H$ is saturated. 


If $U$ is normal in $G$, then by part \ref{i-saturation-normal} of Lemma \ref{l-saturation-basic}, the set of saturated subgroups consists of preimages of subgroups of the quotient $G/U$. This motivates the terminology. 



\begin{remark}
Let $\mathcal{P}$ be a partition of the coset space $G/U$ and denote by $G_\mathcal{P}$ the subgroup of $G$ that preserves individually each subset of the partition $\mathcal{P}$. Then $G_\mathcal{P}$ is saturated, and every saturated subgroup is of this form.
\end{remark}


\begin{lem} \label{lem-saturated-closed}
Let $G$ be a locally compact group, and $U\le G$ a compact subgroup. Then $[H]_U^G$ is closed in $G$ for every closed subgroup $H \leq G$.
\end{lem}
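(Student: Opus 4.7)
The plan is to use the explicit formula from Lemma 2.6(i), namely
\[
[H]_U^G = \bigcap_{g \in G} H\, g U g^{-1},
\]
and to argue that each set in the intersection is closed in $G$. Once this is established, the intersection of any family of closed sets is closed, so $[H]_U^G$ is closed, as required.

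The key point is therefore to show that for every $g \in G$, the subset $H \cdot g U g^{-1}$ is closed in $G$. I would use the general fact that in a topological group, the product $A \cdot K$ of a closed set $A$ and a compact set $K$ is closed. Indeed, if $x_n = a_n k_n$ converges to some $x \in G$ with $a_n \in A$ and $k_n \in K$, then by compactness of $K$ we can extract a subsequence along which $k_n \to k \in K$; then $a_n = x_n k_n^{-1} \to x k^{-1}$, and closedness of $A$ gives $x k^{-1} \in A$, so that $x \in A \cdot K$. Applied with $A = H$ (closed by assumption) and $K = g U g^{-1}$ (compact, since $U$ is compact and conjugation is a homeomorphism), this yields that $H\, g U g^{-1}$ is closed in $G$.

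There is no real obstacle in this argument: the content of the lemma is essentially the combination of the intersection formula of Lemma 2.6(i) with the elementary closedness of $A \cdot K$. The only mild subtlety is to notice that $[H]_U^G$ is a priori a subgroup but the sets $H\, g U g^{-1}$ are mere subsets (unions of left cosets of $H$), so it is the intersection formula that does the work of turning a subgroup-theoretic object into an intersection of topologically tractable pieces. Local compactness enters only implicitly, through the fact that $U$ being a compact subgroup ensures each $g U g^{-1}$ is compact.
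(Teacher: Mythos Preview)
Your proof is correct and follows exactly the same approach as the paper: use the intersection formula $[H]_U^G=\bigcap_g H gUg^{-1}$ from Lemma~\ref{l-saturation-basic}\ref{i-saturation-formula}, observe that each $H gUg^{-1}$ is the product of a closed subgroup and a compact subgroup and hence closed, and conclude that the intersection is closed. The only difference is that you spell out the closed-times-compact argument explicitly, which the paper leaves as a remark.
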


\begin{proof}
For every $g\in G$, the set  $HgUg^{-1}$ is the product of a closed subgoup and a compact subgroup, and hence is a closed subset of $G$. It follows that $[H]_U^G=\bigcap_g HgUg^{-1}$ remains closed. \qedhere
\end{proof}

\begin{defin}
When $U$ is a compact subgroup of $G$, the map $\sub(G)\to \sub(G),\, H\mapsto [H]_U^G$, will be called the \emph{saturation map} relative to $G/U$.
\end{defin}

The next lemma says that the saturation map $H\mapsto [H]_U^G$ is well behaved with respect to the Chabauty topology. 
\begin{lem} \label{l:saturation-map}
Let $G$ be a locally compact group, and $U\le G$ a compact subgroup. The following hold:
\begin{enumerate}[label=(\roman*)]


\item \label{i:sat-measurable} the saturation map $H \mapsto [H]_U^G$ is equivariant.
\item \label{i:sat-up-sc} For every $H_n$ converging to $H$ and every cluster point $K$ of $[H_n]_U^G$, we have $H \leq K \leq [H]_U^G$. In particular $H \mapsto [H]_U^G$ is upper semicontinuous, and measurable.
\item \label{i:sat-continuous} If $U$ is moreover assumed to be normal, then $H\mapsto [H]_U^G$ is continuous.
\end{enumerate}
\end{lem}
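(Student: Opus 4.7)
My approach is to reduce everything to the intersection formula $[H]_U^G = \bigcap_{g \in G} H g U g^{-1}$ supplied by Lemma \ref{l-saturation-basic}\ref{i-saturation-formula}, together with the compactness of $U$.

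First, for \ref{i:sat-measurable}, equivariance reduces to a direct change-of-variables computation in the intersection formula: for $k \in G$, substituting $g = kg'$ yields
\[
[kHk^{-1}]_U^G \,=\, \bigcap_{g\in G} kHk^{-1} g U g^{-1} \,=\, k \bigl(\bigcap_{g' \in G} H g' U g'^{-1}\bigr) k^{-1} \,=\, k[H]_U^G k^{-1}.
\]

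For \ref{i:sat-up-sc}, suppose $H_n \to H$ in $\sub(G)$ and let $K$ be a cluster point of $([H_n]_U^G)$; passing to a subsequence (which I relabel), $[H_n]_U^G \to K$. The inclusion $H \leq K$ is easy: any $h \in H$ is the limit of some $h_n \in H_n \leq [H_n]_U^G$, so $h \in K$. The substance is in proving $K \leq [H]_U^G$. For this, fix $k \in K$ and choose $k_n \in [H_n]_U^G$ with $k_n \to k$. For an arbitrary $x \in G$, write $k_n = h_n x u_n x^{-1}$ with $h_n \in H_n$ and $u_n \in U$. Compactness of $U$ is the crucial ingredient: it lets me extract a further subsequence along which $u_n \to u \in U$, and then $h_n = k_n x u_n^{-1} x^{-1} \to k x u^{-1} x^{-1}$; since $H_n \to H$, this limit lies in $H$, so $k \in H x U x^{-1}$. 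As $x \in G$ was arbitrary, $k \in [H]_U^G$. This is exactly upper semicontinuity in the sense defined before Lemma \ref{l-semicontinuous-measurable}, and measurability then follows from that lemma.

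For \ref{i:sat-continuous}, when $U$ is normal Lemma \ref{l-saturation-basic}\ref{i-saturation-normal} gives $[H]_U^G = HU$, so I must prove that $H_n U \to HU$ whenever $H_n \to H$. The upper bound on cluster points is exactly the content of \ref{i:sat-up-sc}; for the lower half of Chabauty convergence, given $hu \in HU$ I pick $h_n \in H_n$ with $h_n \to h$ and observe that $h_n u \in H_n U$ converges to $hu$. Combining the two bounds gives Chabauty convergence $H_n U \to HU$.

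The main obstacle is the compactness argument in \ref{i:sat-up-sc}: the intersection formula is indexed over all $x \in G$, and for each such $x$ one must independently extract a convergent subsequence of the $U$-coordinates in the decomposition $k_n = h_n x u_n x^{-1}$, which is precisely why the compactness of $U$ (rather than mere closedness) is essential.
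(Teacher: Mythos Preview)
Your proof is correct and follows essentially the same approach as the paper: equivariance from the intersection formula, upper semicontinuity by showing each cluster point is trapped inside every $HgUg^{-1}$ via compactness of $U$, and measurability via Lemma~\ref{l-semicontinuous-measurable}. The only cosmetic difference is in \ref{i:sat-continuous}: the paper simply invokes Lemma~\ref{lem-chab-prelim} (continuity of $H\mapsto \pi(H)$ and $H\mapsto\pi^{-1}(H)$ for the quotient by a compact normal subgroup, whose composite is $H\mapsto HU$), whereas you give a direct two-sided Chabauty argument---both are equally valid.
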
 



\begin{proof}
We denote by $\varphi$ the map from the statement.

\ref{i:sat-measurable}. The fact that $\varphi$ is equivariant is apparent from the formula \eqref{e:intersection}. 

\ref{i:sat-up-sc}. Let $(n_k)$ such that $\varphi(H_{n_k})$ converges to $K$. Since $H_{n_k} \leq \varphi(H_{n_k})$ for all $k$ and $H_{n_k}$ converges to $H$, we have $H \leq K$. Moreover for every $g\in G$, the subset $H_{n_k} gUg^{-1}$ converges to $H gUg^{-1}$ in the Hausdorff topology on all closed subsets of $G$. Since $\varphi(H_{n_k})\subset H_{n_k} gUg^{-1}$, it follows that $K$ is contained in $HgUg^{-1}$ for all $g\in G$, and therefore $K$ is contained in $\varphi(H)$. This shows upper semicontinuity, and measurability follows (Lemma \ref{l-semicontinuous-measurable}).

\ref{i:sat-continuous} follows from Lemma \ref{lem-chab-prelim}. \qedhere
\end{proof}


The image of the saturation map $[\, \cdot \,]_U^G$ is exactly the set of saturated subgroups. We warn the reader that it is not a closed subset of $\sub(G)$ in general. This is illustrated by the following example, which shows simultaneously that the map $[\, \cdot \,]_U^G$ is not continuous in general. 


\begin{example}
Suppose that we have $G = H \rtimes U$, where $U$ is a non-trivial compact group such that for every $u \neq 1$, the set of all commutators $[h,u]$, when $h$ varies in $H$, is the entire $H$. If $H$ is not isolated in $\sub(H)$, then the set of saturated subgroups is not closed in $\sub(G)$, and $[\, \cdot \,]_U^G$ is not continuous. 

Indeed, let $(H_n)$ be a sequence of proper subgroups of $H$ converging to $H$, and let $\gamma \in [H_n]_U^G$. Since $\gamma \in H_n U$, one may write $\gamma = h_n u_0$. Additionally for every $h \in H$, there exists $k_n \in H_n$ and $u \in U$ such that $k_n \gamma = h u h^{-1}$, i.e.\ $k_n h_n u_0 = [h,u] u$. Necessarily we have $u=u_0$, and $k_n h_n = [h,u_0]$. Therefore $[H,u_0]$ lies in a proper subgroup of $H$, which forces $u_0$ to be trivial by our assumption. So we deduce $\gamma = h_n$ and finally $[H_n]_U^G = H_n$. Therefore $H_n$ is saturated for all $n$, while $[H]_U^G = G$ shows that the limit $H$ is not saturated. A concrete example is given by $G = \mathbb{Z}[1/2] \rtimes \left\{ \pm 1\right\}$ and $H_n = \frac{1}{2^n} \mathbb{Z}$ for $n \geq 1$.
\end{example}


%
%
%

%


\begin{defin}
Let $(G_n, U_n)$ be a bi-approximation of $G$. The maps
\[
\lambda_n\colon \sub(G)\to \sub(G), \, H\mapsto [H\cap G_n]^{G_n}_{U_n}
\]
will be called the \emph{trunc-saturation} maps associated  $(G_n, U_n)$.
\end{defin}


For future reference we summarize the following basic properties:

\begin{prop} \label{prop:trunc-saturation}
For every $n$, the  map $\lambda_n$ is Borel-measurable, upper semicontinuous, $G_n$-equivariant (but not necessarily $G$-equivariant), and takes values in the set of closed $U_n$-saturated subgroups of $G_n$. Moreover $\lambda_n$ is continuous if $U_n\unlhd G_n$.
\end{prop}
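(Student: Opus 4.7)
The plan is to factor $\lambda_n$ as a composition $\sigma_n \circ \tau_n$, where
\[
\tau_n \colon \sub(G) \to \sub(G_n), \quad H \mapsto H \cap G_n,
\]
is the truncation map and
\[
\sigma_n \colon \sub(G_n) \to \sub(G_n), \quad K \mapsto [K]^{G_n}_{U_n}
\]
is the saturation map relative to $G_n/U_n$. This factorisation is legitimate because $U_n$ is a compact subgroup of $G_n$ (it is closed and contained in the relatively compact set $\bigcup U_n$, by Definition~\ref{def-bi-app}), and because $\sub(G_n)$ sits as a closed subspace of $\sub(G)$ via Lemma~\ref{lem-chab-prelim}\ref{i:inclusion}, so the composition viewed inside $\sub(G)$ indeed coincides with $\lambda_n$.

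Next I would read off each factor's properties from the lemmas already proved. Since $G_n$ is open in $G$, Lemma~\ref{lem-chab-prelim}\ref{i:trunc} gives that $\tau_n$ is continuous, and it is manifestly $G_n$-equivariant because $G_n$ is stable under its own conjugation. On the other side, Lemma~\ref{lem-saturated-closed} ensures that $\sigma_n$ takes values in closed subgroups of $G_n$, while Lemma~\ref{l:saturation-map}, applied to the locally compact group $G_n$ with its compact subgroup $U_n$, yields that $\sigma_n$ is $G_n$-equivariant and upper semicontinuous, and moreover continuous whenever $U_n \unlhd G_n$.

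From this the claimed properties of $\lambda_n = \sigma_n \circ \tau_n$ follow almost mechanically: it is $G_n$-equivariant as a composition of two $G_n$-equivariant maps; its image lies in the closed $U_n$-saturated subgroups of $G_n$ by idempotency of saturation (Lemma~\ref{l-saturation-basic}\ref{i-double-saturation}); and it is continuous when $U_n \unlhd G_n$, being a composition of continuous maps. Borel-measurability will follow from Lemma~\ref{l-semicontinuous-measurable} once upper semicontinuity is established. The only step that requires an explicit (but routine) verification is that the composition of a continuous map with an upper semicontinuous one is upper semicontinuous: if $H_k \to H$ in $\sub(G)$, then $\tau_n(H_k) \to \tau_n(H)$ by continuity of $\tau_n$, and any cluster point of $\sigma_n(\tau_n(H_k))$ is contained in $\sigma_n(\tau_n(H))$ by upper semicontinuity of $\sigma_n$. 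I do not expect any serious obstacle; the parenthetical failure of $G$-equivariance is explained by the fact that conjugation by an element of $G \setminus G_n$ need not preserve $G_n$, hence does not commute with $\tau_n$.
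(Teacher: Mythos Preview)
Your proof is correct and follows exactly the same approach as the paper: factor $\lambda_n$ as the truncation $H\mapsto H\cap G_n$ (continuous by Lemma~\ref{lem-chab-prelim}\ref{i:trunc}) followed by the saturation map on $\sub(G_n)$, and read off all the asserted properties from Lemmas~\ref{lem-saturated-closed} and~\ref{l:saturation-map}. The paper's version is simply a one-line pointer to these lemmas, whereas you have spelled out the routine verifications (compactness of $U_n$, composition of continuous with upper semicontinuous, idempotency of saturation) more explicitly.
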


\begin{proof}
Note that the map $H \mapsto H \cap G_n$ must be continuous since $G_n$ is open in $G$ (Lemma \ref{lem-chab-prelim} \ref{i:trunc}), so all the properties follow from Lemmas \ref{lem-saturated-closed} and \ref{l:saturation-map}.
\end{proof}

The following fact will be used repeatedly. 


\begin{lem}\label{l:ts-maps}
Let $(G_n, U_n)$ be a bi-approximation of $G$, and let $\lambda_n\colon \sub(G)\to \sub(G)$ be the associated  trunc-saturation maps.  Then the sequence of maps $(\lambda_n)$ converges uniformly, as $n\to \infty$, to the identity map $\sub(G)\to \sub(G)$.
\end{lem}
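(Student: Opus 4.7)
The plan is to argue by compactness and contradiction. Fix a metric $d$ compatible with the (compact metrizable) topology on $\sub(G)$. If $(\lambda_n)$ fails to converge uniformly to the identity, there exist $\varepsilon>0$, an increasing sequence $(n_k)$, and subgroups $H_k\in\sub(G)$ with $d(\lambda_{n_k}(H_k),H_k)\ge \varepsilon$ for all $k$. By compactness of $\sub(G)$ we may pass to a further subsequence (which I still index by $k$) such that both $H_k\to H$ and $\lambda_{n_k}(H_k)\to K$ in $\sub(G)$. It then suffices to show that $K=H$, which contradicts $d(K,H)\ge\varepsilon$.

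For the inclusion $H\le K$: given $h\in H$, Chabauty convergence $H_k\to H$ provides $h_k'\in H_k$ with $h_k'\to h$. Since $G=\bigcup G_n$, there is $n_0$ with $h\in G_{n_0}$, and since each $G_{n_k}$ is open, from $h_k'\to h$ we get $h_k'\in G_{n_k}$ for all large $k$. Hence $h_k'\in H_k\cap G_{n_k}\subseteq [H_k\cap G_{n_k}]^{G_{n_k}}_{U_{n_k}}=\lambda_{n_k}(H_k)$, and passing to the limit gives $h\in K$.

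For the reverse inclusion $K\le H$: given $k\in K$, pick $x_k\in \lambda_{n_k}(H_k)$ with $x_k\to k$. From the formula in Lemma~\ref{l-saturation-basic}\ref{i-saturation-formula}, specialized to $g=1\in G_{n_k}$, we get
\[
\lambda_{n_k}(H_k)=[H_k\cap G_{n_k}]^{G_{n_k}}_{U_{n_k}}\subseteq (H_k\cap G_{n_k})\,U_{n_k}.
\]
Thus we may write $x_k=h_ku_k$ with $h_k\in H_k\cap G_{n_k}$ and $u_k\in U_{n_k}$. Define $v_n\in U_n$ by $v_{n_k}=u_k$ for the indices in the subsequence and $v_n=1$ otherwise. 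Then Lemma~\ref{lem-reformul-U} applied to the bi-approximation yields $v_n\to 1$ in $G$; in particular $u_k=v_{n_k}\to 1$. Therefore $h_k=x_ku_k^{-1}\to k$, and since $h_k\in H_k$ with $H_k\to H$, the Chabauty characterization forces $k\in H$. Combining the two inclusions we obtain $K=H$, the desired contradiction.

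The argument is clean once the right inclusion $\lambda_n(H)\subseteq (H\cap G_n)U_n$ is isolated; the only mild technical point is handling the subsequence $n_k$ when invoking Lemma~\ref{lem-reformul-U}, which is why I extend $(u_k)$ to an honest sequence in $(U_n)$. The rest is a routine double-inclusion argument based on the two defining properties of a bi-approximation: that $\bigcup G_n=G$ with the $G_n$ open (used to produce the approximants $h_k$ in $\lambda_{n_k}(H_k)$) and that $U_n\to\{1\}$ with $\bigcup U_n$ relatively compact (used to absorb the $U_{n_k}$-correction $u_k$).
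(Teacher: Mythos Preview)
Your proof is correct and follows essentially the same approach as the paper's: the paper reformulates uniform convergence on the compact metric space $\sub(G)$ as the statement that $H_n\to H$ implies $\lambda_n(H_n)\to H$, and then proves the two inclusions exactly as you do (using $H\cap G_n\subseteq \lambda_n(H)\subseteq (H\cap G_n)U_n$ together with $U_n\to\{1\}$). Your compactness/contradiction setup is just the standard justification of that reformulation, so the arguments coincide.
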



\begin{proof}
Since $\sub(G)$ is metrisable, the statement can equivalently be formulated as follows: for every sequence $H_n$ converging to $H$ in $\sub(G)$, the sequence $\lambda_n(H_n)$ also converges to $H$. Let us show that this holds.

Let $h\in H$. Since $H_{n}$ tends to $H$, we can choose a sequence $h_{n}\in H_{n}$ that converges to $h$. If $n$ is large enough, we can assume $h\in G_{n}$ hence also $h_{n}\in G_{n}$ for $n$ sufficiently large. It follows that eventally $h_{n}\in H_{n}\cap G_{n}$, which is contained in $\lambda_{n}(H_{n})$ and therefore $h$ belongs to any cluster point of $(\lambda_{n}(H_{n}))$. Conversely let $h_{n}\in \lambda_{n}(H_{n})$ be a sequence converging to some $h\in G$. Since $\lambda_n(H)\subset HU_n$ for all $n$ and all $H\in \sub(G)$, we can write $h_{n}=h'_{n}u_{n}, h'_{n}\in H_{n}, u_{n}\in U_{n}$, and we have $u_{n}\to 1$ (Lemma \ref{lem-reformul-U}). It follows that also $h'_{n}\in H_{n}$ converges to $h$. Therefore $h\in H$. \qedhere

\end{proof}

\subsection{Bi-approximations and probability measures on $\sub(G)$}

Assume $(G_n, U_n)$ is a bi-approximation of $G$, and denote by $\lambda_n$ the associated trunc-saturation maps. Measurability and equivariance of $\lambda_n$ (Proposition \ref{prop:trunc-saturation}) imply that $\lambda_n$ induces a map $\lambda_n^\ast\colon \prob(\sub(G))\to \prob(\sub(G))$ by pushforward of measures, which remains $G_n$-equivariant (but not necessarily $G$-equivariant).


\begin{defin}
Let $G$ be a locally compact group and $U\le G$ be a compact subgroup. We say that  $\mu\in \irs(G)$ is $U$-\emph{saturated}  if $\mu$-almost every $H\in \sub(G)$ is $U$-saturated.
\end{defin}

\begin{remark}
When $U$ is normal in $G$, an IRS of $G$ is $U$-saturated if and only if it is the pushforward of an IRS of $G/U$ under the natural map $\sub(G/U) \to \sub(G)$, and the set of $U$-saturated IRS's is naturally identified with $\irs(G/U)$.
\end{remark}



\begin{prop} \label{p:no-IRS-limit}
Let $(G_n, U_n)$ be a bi-approximation of $G$. Then the sequence of maps 
\[\lambda_n^\ast\colon \prob(\sub(G))\to \prob(\sub(G))\] converges uniformly,  as $n\to \infty$, to the identity of $\prob(\sub(G))$. In particular every $\mu\in \irs(G)$ is the limit of a  sequence $\nu_n\in \irs(G_n)$, where each $\nu_n$ is $U_n$-saturated.
\end{prop}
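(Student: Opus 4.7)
The plan is to derive both assertions directly from Lemma \ref{l:ts-maps}, by transporting uniform convergence at the level of $\sub(G)$ to the level of $\prob(\sub(G))$, and then to take $\nu_n := \lambda_n^\ast \mu$ as the approximating sequence.

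First I would fix a metric $d$ on the compact metrisable space $\sub(G)$ compatible with the Chabauty topology, and metrize $\prob(\sub(G))$ by the $L^1$-Wasserstein distance $W_1$ induced by $d$. (Equivalently one could use the Lévy--Prokhorov metric, since on a compact space any two metrics compatible with the weak-$\ast$ topology give rise to the same uniformity, hence the same notion of uniform convergence of a sequence of maps.) Given $\mu \in \prob(\sub(G))$, the Borel map $H \mapsto (H, \lambda_n(H))$ (measurable by Proposition \ref{prop:trunc-saturation}) pushes $\mu$ forward to a coupling of $\mu$ and $\lambda_n^\ast \mu$ on $\sub(G) \times \sub(G)$, whence
\[
W_1(\mu, \lambda_n^\ast \mu) \leq \int_{\sub(G)} d(H, \lambda_n(H)) \, d\mu(H) \leq \sup_{H \in \sub(G)} d(H, \lambda_n(H)).
\]
The right-hand side is independent of $\mu$ and tends to $0$ by Lemma \ref{l:ts-maps}, giving the asserted uniform convergence $\lambda_n^\ast \to \mathrm{id}$ on $\prob(\sub(G))$.

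For the second assertion, given $\mu \in \irs(G)$ I would set $\nu_n := \lambda_n^\ast \mu$. By Proposition \ref{prop:trunc-saturation} the map $\lambda_n$ is $G_n$-equivariant, so $\nu_n$ is $G_n$-invariant (since $\mu$ is $G$-invariant, in particular $G_n$-invariant); and $\lambda_n$ takes values in the set of $U_n$-saturated closed subgroups of $G_n$, so $\nu_n$ is concentrated on $\sub(G_n) \subset \sub(G)$ and is $U_n$-saturated. Under the natural inclusion $\prob(\sub(G_n)) \hookrightarrow \prob(\sub(G))$ this gives $\nu_n \in \irs(G_n)$, and the first part then yields $\nu_n \to \mu$ in $\prob(\sub(G))$.

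The only mildly delicate point is the choice of a metric on $\prob(\sub(G))$ for which uniform convergence of $\lambda_n$ passes to uniform convergence of $\lambda_n^\ast$; the Wasserstein (or Prokhorov) metric handles this by the one-line coupling estimate above, and beyond this the proof is purely a matter of unwrapping the equivariance and measurability properties already collected in Proposition \ref{prop:trunc-saturation}.
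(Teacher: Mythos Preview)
Your proof is correct and follows the same overall strategy as the paper: both transport the uniform convergence of $\lambda_n$ to the identity (Lemma \ref{l:ts-maps}) up to the level of $\prob(\sub(G))$, and then set $\nu_n=\lambda_n^\ast\mu$. The only difference is in the implementation of that transport step: the paper tests against continuous functions and uses the estimate $\int |f\circ\lambda_n-f|\,d\mu_n\le \sup|f\circ\lambda_n-f|\to 0$ (together with the sequential reformulation of uniform convergence, as in the proof of Lemma \ref{l:ts-maps}), whereas you metrize $\prob(\sub(G))$ by the Wasserstein distance and bound $W_1(\mu,\lambda_n^\ast\mu)$ via the obvious coupling $H\mapsto(H,\lambda_n(H))$. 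Your route is a touch slicker and makes the uniformity in $\mu$ immediate; the paper's route avoids invoking Wasserstein theory and stays closer to the bare weak-$\ast$ definition. Either way the substance is identical.
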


\begin{proof}
Arguing as in  the proof of Lemma \ref{l:ts-maps}, it is enough to show that for every sequence $\mu_n\in \prob(\sub(G))$ converging to some limit $\mu\in \prob(\sub(G))$, the sequence $\lambda_n^\ast(\mu_n)$ still converges to $\mu$. To this end, let $f\colon \sub(G)\to \R$ be a continuous function, and observe that
\[|\int fd\lambda_n^\ast(\mu_n)-\int f d\mu|=|\int f\circ \lambda_n d\mu_n-\int f d\mu|\leq |\int fd\mu_n -\int f d\mu|+\int |f\circ \lambda_n-f|d\mu_n.\]
The first term of the last sum tends to 0 because $\mu_n\to \mu$. To bound the second term, observe that $f\circ \lambda_n$ tends to $f$ uniformly, because $f$ is continuous (hence uniformly continuous) and $\lambda_n$ tends to the identity uniformly by Lemma \ref{l:ts-maps}. Therefore $\int |f\circ \lambda_n-f|d\mu_n\le \sup |f\circ\lambda_n-f|\to 0$. Finally the last sentence follows by taking $\nu_n=\lambda^\ast_n(\mu)$.  \qedhere

\end{proof}

\begin{thm} \label{thm:no-irs-limit}
If $(G_n, U_n)$ is a bi-approximation of $G$, the following are equivalent:
\begin{enumerate}[label=(\roman*)]

\item \label{i:no-IRS} the group $G$ has no non-trivial IRS;


\item \label{i:no-IRS-saturated} for every sequence $\mu_n\in \irs(G_n)$ of $U_n$-saturated IRS's, every cluster point of $(\mu_n)$ in $\prob(\sub(G))$ is a convex combination of $\delta_1$ and $\delta_G$. 
\end{enumerate}
\end{thm}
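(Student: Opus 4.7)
The equivalence is essentially packaged by Proposition~\ref{p:no-IRS-limit} on one side and by the ascending-union structure $G=\bigcup_n G_n$ on the other. I will prove the two implications separately.

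\textbf{(ii) $\Rightarrow$ (i).} Given $\mu\in \irs(G)$, the final sentence of Proposition~\ref{p:no-IRS-limit} produces a sequence $\nu_n\in\irs(G_n)$ with each $\nu_n$ being $U_n$-saturated such that $\nu_n\to\mu$ in $\prob(\sub(G))$. Since $\mu$ is then the unique cluster point of $(\nu_n)$, hypothesis (ii) forces $\mu$ to be a convex combination of $\delta_1$ and $\delta_G$. Nothing else is needed here.

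\textbf{(i) $\Rightarrow$ (ii).} The point is to show that every cluster point of a sequence of $G_n$-invariant measures is automatically $G$-invariant, after which the conclusion is immediate from (i). So let $(\mu_n)$ with $\mu_n\in\irs(G_n)$ be $U_n$-saturated, and let $\mu\in\prob(\sub(G))$ be a cluster point; pick a subsequence $\mu_{n_k}\to \mu$. Fix $g\in G$. Because $(G_n)$ is increasing with $G=\bigcup_n G_n$, there exists $n_0$ such that $g\in G_n$ for all $n\ge n_0$, and hence $g_*\mu_n=\mu_n$ for such $n$. The pushforward by the continuous conjugation map on $\sub(G)$ is weak-$\ast$ continuous on $\prob(\sub(G))$, so letting $k\to\infty$ gives $g_*\mu=\mu$. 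As $g\in G$ was arbitrary, $\mu\in\irs(G)$, and (i) finishes the job.

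\textbf{Where the work really lies.} Neither implication requires a delicate estimate at this stage; the substantive content is hidden in Proposition~\ref{p:no-IRS-limit}, whose proof relied on the uniform convergence $\lambda_n\to \mathrm{id}$ in Lemma~\ref{l:ts-maps} and on the preservation of the saturation condition by the maps $\lambda_n$. The one point deserving explicit mention is the identification $\prob(\sub(G_n))\subset \prob(\sub(G))$ under which $\mu_n$ is viewed as an element of $\prob(\sub(G))$: this is legitimate by Lemma~\ref{lem-chab-prelim}\ref{i:inclusion}, since $G_n$ is closed in $G$ (being open). Once this identification is fixed, the pushforward argument above is a routine continuity statement and the equivalence follows.
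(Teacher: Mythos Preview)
Your proof is correct and follows essentially the same approach as the paper: (ii)$\Rightarrow$(i) is deduced from the last sentence of Proposition~\ref{p:no-IRS-limit}, and (i)$\Rightarrow$(ii) from the observation that any cluster point of $(\mu_n)$ is $G_n$-invariant for arbitrarily large $n$ and hence $G$-invariant. You have simply spelled out the latter argument in more detail than the paper does.
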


\begin{proof}
The implication \ref{i:no-IRS}$\Rightarrow$\ref{i:no-IRS-saturated} follows from the observation that every cluster point of $\mu_n$ is $G_n$-invariant for arbitrarily high $n$ and therefore is an IRS of $G$. \ref{i:no-IRS-saturated}$\Rightarrow$\ref{i:no-IRS} follows from the last statement of Proposition \ref{p:no-IRS-limit}. \qedhere
\end{proof}

In the particular case when $U_n$ is normal in $G_n$, we obtain:

\begin{cor}\label{c:no-irs-limit-normal}
If $(G_n, U_n)$ is a bi-approximation of $G$ such that each $U_n$ is normal in $G_n$, and the group $G_n/U_n$ has no non-trivial IRS's for all $n\geq 1$, then $G$ has no non-trivial IRS's. 
\end{cor}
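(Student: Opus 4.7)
The strategy is to invoke Theorem \ref{thm:no-irs-limit}: it suffices to show that any cluster point in $\prob(\sub(G))$ of a sequence $(\mu_n)$ with $\mu_n\in \irs(G_n)$ and $\mu_n$ being $U_n$-saturated must be a convex combination of $\delta_1$ and $\delta_G$.

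Fix such a sequence $(\mu_n)$. Since $U_n$ is normal in $G_n$, the remark following the definition of $U$-saturated IRS's identifies the set of $U_n$-saturated IRS's of $G_n$ with $\irs(G_n/U_n)$ via pushforward under $\sub(G_n/U_n)\to \sub(G_n)$, $H\mapsto \pi_n^{-1}(H)$, where $\pi_n\colon G_n\to G_n/U_n$ is the quotient map. Therefore $\mu_n$ is the pushforward of some $\bar\mu_n\in \irs(G_n/U_n)$. By hypothesis, $\bar\mu_n$ is a convex combination $\alpha_n\delta_{\{1\}}+(1-\alpha_n)\delta_{G_n/U_n}$, and pulling back, this gives
\[\mu_n=\alpha_n \delta_{U_n}+(1-\alpha_n)\delta_{G_n}\]
as a probability measure on $\sub(G)$, for some $\alpha_n\in[0,1]$.

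Now consider any cluster point of $(\mu_n)$; passing to a subsequence, I may assume $\alpha_n\to \alpha\in[0,1]$. The bi-approximation hypothesis says that $U_n\to \{1\}$ in $\sub(G)$, and since $(G_n)$ is an increasing sequence of open subgroups with union $G$, Lemma \ref{lem-chab-prelim} gives $G_n\to G$ in $\sub(G)$. Consequently $\delta_{U_n}\to \delta_{\{1\}}$ and $\delta_{G_n}\to \delta_G$ in $\prob(\sub(G))$ for the weak-$\ast$ topology, and therefore
\[\mu_n=\alpha_n\delta_{U_n}+(1-\alpha_n)\delta_{G_n}\ \longrightarrow\ \alpha\delta_1+(1-\alpha)\delta_G.\]
This verifies condition \ref{i:no-IRS-saturated} of Theorem \ref{thm:no-irs-limit}, hence $G$ has no non-trivial IRS's.

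There is no real obstacle here: the only subtlety is to correctly identify, under the normality assumption, the $U_n$-saturated IRS's of $G_n$ with IRS's of the quotient $G_n/U_n$ (Lemma \ref{l-saturation-basic}\ref{i-saturation-normal}), so that the hypothesis on $G_n/U_n$ translates into the extreme saturated IRS's $\delta_{U_n}$ and $\delta_{G_n}$, which then Chabauty-converge to the trivial IRS's of $G$.
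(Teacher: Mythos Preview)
Your proof is correct and follows essentially the same approach as the paper: identify $U_n$-saturated IRS's of $G_n$ with IRS's of $G_n/U_n$, use the hypothesis to write each $\mu_n$ as $\alpha_n\delta_{U_n}+(1-\alpha_n)\delta_{G_n}$, and pass to the limit using $U_n\to\{1\}$ and $G_n\to G$ in $\sub(G)$ before invoking Theorem~\ref{thm:no-irs-limit}. The paper's argument is simply a more condensed version of what you wrote.
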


\begin{proof}
Under the assumption, every $\mu_n \in \irs(G_n,{G_n/U_n})$ is a convex combination of $\delta_{U_n}$ and $\delta_{G_n}$. Since $U_n\to \{1\}$ and $G_n\to G$ in $\sub(G)$, it follows that every cluster point of such measures is a convex combination of $\delta_1$ and $\delta_{G}$. The statement then follows from Theorem \ref{thm:no-irs-limit}.
\end{proof}


\subsection{Bi-approximations and  closed subsets of $\sub(G)$}
We now analyse the topological setting.
\begin{defin}
Let $G$ be a locally compact group and let $U\le G$ be a compact subgroup. Let $\mathcal{H}\in \cl(\sub(G))^G$. We say that $\mathcal{H}$ is $U$-\emph{saturated} if the set of $U$-saturated subgroups is dense in $\H$. \end{defin}



\begin{remarks}
\begin{enumerate}[label=(\roman*)]

\item If $\mathcal{H}$ is $U$-saturated, it automatically follows that the set of $U$-saturated subgroups of $\mathcal{H}$ is a dense $G_\delta$ subset of $\H$. This is because the saturation map $H\mapsto {[H]}_{U}^G$, being upper semi-continuous, is continuous on a dense $G_\delta$ subset of $\H$  (see \cite[p.\ 95, Th. 1]{Choquet}, and recall that $\H$ is metrisable here). Since ${[\, \cdot \,]}_{U}^G$ coincides with the identity on a dense subset, it must be the identity on every continuity point.


\item If $U$ is normal, then $\H\in \cl(\sub(G))^G$ is saturated if and only if it consists of saturated subgroups. This is because in this case the map $H\mapsto [H]^{G}_{G/U}$ is continuous by Lemma \ref{l:saturation-map}. In particular the set of $U$-saturated $\H\in \cl(\sub(G))^G$ is in bijection with $\cl(\sub(G/U))^{G/U}$.
\end{enumerate}
\end{remarks}

%
\begin{defin}
Let $(G_n, U_n)$ be a bi-approximation of a locally compact group $G$, with trunc-saturation maps $\lambda_n\colon \sub(G)\to \sub(G)$. We define a sequence of $G_n$-equivariant maps $\bar{\lambda}_n\colon \cl(\sub(G))\to \cl(\sub(G))$ by \[ \bar{\lambda}_n(\mathcal{H}) = \overline{\left\{\lambda_n(H) \, : \, H \in \H \right\}}. \]
\end{defin}




We will need the following lemma.

\begin{lem}  \label{l:closure-saturated}
Let $n\geq 1$. Let $\H\in \cl(\sub(G))$. Then for every $K\in \overline{\lambda}_n(\H)$, there exists $H\in \H$ such that $H\cap G_n\le K\le \lambda_n(H)$. 
\end{lem}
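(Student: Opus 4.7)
The plan is to unwind the closure defining $\bar{\lambda}_n(\H)$ and then apply the upper semicontinuity of the truncation and saturation maps already established.

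Given $K\in \bar{\lambda}_n(\H)$, by definition of the closure there exists a sequence $(H_m)\subset \H$ such that $\lambda_n(H_m)\to K$ in $\sub(G)$. Since $\sub(G)$ is compact and $\H$ is closed, upon passing to a subsequence I may assume that $H_m\to H$ for some $H\in \H$. Everything will then be deduced from this pair of convergences.

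Next, since $G_n$ is open in $G$, the truncation map $H'\mapsto H'\cap G_n$ is continuous (Lemma \ref{lem-chab-prelim}\ref{i:trunc}), so $H_m\cap G_n\to H\cap G_n$ in $\sub(G_n)\subset \sub(G)$. Now apply the upper semicontinuity of the saturation map $[\,\cdot\,]_{U_n}^{G_n}$ (Lemma \ref{l:saturation-map}\ref{i:sat-up-sc}, applied inside $G_n$, which is legitimate since $U_n\le G_n$ is compact) to the convergent sequence $H_m\cap G_n\to H\cap G_n$: every cluster point of $\bigl([H_m\cap G_n]_{U_n}^{G_n}\bigr)=(\lambda_n(H_m))$ lies between $H\cap G_n$ and $[H\cap G_n]_{U_n}^{G_n}=\lambda_n(H)$. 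Since $\lambda_n(H_m)$ actually converges to $K$, in particular $K$ is such a cluster point, and therefore $H\cap G_n\le K\le \lambda_n(H)$, as required.

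There is no real obstacle: the statement is designed as a routine consequence of the two continuity properties (continuity of truncation, upper semicontinuity of saturation) collected earlier in the section, combined with compactness of $\sub(G)$. The only thing to be mildly careful about is to remember that $\lambda_n$ is not a priori continuous (the compact subgroups $U_n$ need not be normal in $G_n$), which is precisely why the conclusion produces the two-sided sandwich $H\cap G_n\le K\le \lambda_n(H)$ rather than the equality $K=\lambda_n(H)$.
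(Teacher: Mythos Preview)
Your proof is correct and follows essentially the same approach as the paper's own proof: pick a sequence $(H_m)\subset\H$ with $\lambda_n(H_m)\to K$, pass to a subsequence so that $H_m\to H\in\H$, use continuity of truncation by the open subgroup $G_n$, and then invoke the upper semicontinuity statement of Lemma~\ref{l:saturation-map}\ref{i:sat-up-sc}. Your write-up is slightly more detailed (and your closing remark about why one only gets a sandwich inequality is a nice touch), but the argument is the same.
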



\begin{proof}
Let $(H_k)\subset \mathcal{H}$ such that $(\lambda_n(H_k))$ converges to $K$ as $k \to \infty$ (note that $n$ is fixed).  Upon taking a subsequence we may assume that $(H_k)$ converges to some $H\in \mathcal{H}$. By continuity we have $H_k \cap G_n \to H\cap G_n$ (Lemma \ref{lem-chab-prelim}), and the statement then follows from Lemma \ref{l:saturation-map}. \qedhere
\end{proof}

The following is the analogue of Proposition \ref{p:no-IRS-limit} for closed subspaces of $\sub(G)$ rather than probability measures on $\sub(G)$.

\begin{prop}\label{p:no-URS-limit}
Let $G$ be a locally compact group, and let $(G_n, U_n)$ be a bi-approximation of $G$. Then the sequence of maps \[\bar{\lambda}_n\colon \cl(\sub(G))\to \cl(\sub(G))\] converges uniformly, as $n\to \infty$, to the identity of $\cl(\sub(G))$. In particular, every $\H\in \cl(\sub(G))^G$ is the limit of a sequence $\mathcal{K}_n\in\cl(\sub(G_n))^{G_n}$, where each $\mathcal{K}_n$  is $U_n$-saturated.
\end{prop}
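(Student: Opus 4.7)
The plan is to follow the argument of Proposition \ref{p:no-IRS-limit} closely, with the Hausdorff topology on $\cl(\sub(G))$ playing the role of the weak-$\ast$ topology on $\prob(\sub(G))$, and Lemma \ref{l:closure-saturated} providing the key technical input in place of a dominated convergence computation. As in the proof of Lemma \ref{l:ts-maps}, using metrizability of $\cl(\sub(G))$ together with a standard compactness/subsequence argument, uniform convergence of $(\bar{\lambda}_n)$ to the identity is equivalent to the following sequential diagonal statement: for every $\H_n \to \H$ in $\cl(\sub(G))$, one has $\bar{\lambda}_n(\H_n) \to \H$ in the Hausdorff topology. I will verify this by checking the two halves of Kuratowski--Hausdorff convergence.

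For the easy half, given $H \in \H$, pick $H_n \in \H_n$ with $H_n \to H$ (possible since $\H_n \to \H$); then $\lambda_n(H_n) \in \bar{\lambda}_n(\H_n)$, and Lemma \ref{l:ts-maps} yields $\lambda_n(H_n) \to H$. For the substantive half, let $K_n \in \bar{\lambda}_n(\H_n)$ and let $K$ be a cluster point. Apply Lemma \ref{l:closure-saturated} (to each $\H_n$) to produce $H_n \in \H_n$ with $H_n \cap G_n \le K_n \le \lambda_n(H_n)$. Extracting subsequences, assume simultaneously $K_n \to K$ and $H_n \to H$; then $H \in \H$ because $\H_n \to \H$. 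By Lemma \ref{l:ts-maps} we have $\lambda_n(H_n) \to H$; and since $(G_n)$ is an increasing sequence of open subgroups with union $G$, one checks directly from the sequential description of the Chabauty topology that $H_n \cap G_n \to H$ as well. A sandwich argument in the Chabauty topology then forces $K = H$, hence $K \in \H$.

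For the final assertion, set $\mathcal{K}_n := \bar{\lambda}_n(\H)$. Uniform convergence applied to the constant sequence gives $\mathcal{K}_n \to \H$ immediately. Each $\mathcal{K}_n$ consists of subgroups of $G_n$ (since $\lambda_n$ takes values in $\sub(G_n)$), is $G_n$-invariant (by the $G_n$-equivariance of $\lambda_n$ and the $G$-invariance of $\H$), and is $U_n$-saturated because its dense subset $\{\lambda_n(H) : H \in \H\}$ consists of $U_n$-saturated subgroups of $G_n$ by construction.

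The main obstacle is the sandwich step: one must verify that if $A_n \le K_n \le C_n$ in $\sub(G)$ with $A_n$ and $C_n$ both converging to the same limit $H$, then any cluster point of $(K_n)$ equals $H$. This is routine from the sequential description of the Chabauty topology, but it is the place where Lemmas \ref{l:ts-maps} and \ref{l:closure-saturated} genuinely combine; everything else is formal.
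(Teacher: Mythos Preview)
Your proof is correct and follows essentially the same approach as the paper's own proof: the same reduction to the diagonal sequential statement, the same use of Lemma \ref{l:ts-maps} for the easy half, the same application of Lemma \ref{l:closure-saturated} followed by the sandwich argument for the hard half. Your treatment of the ``in particular'' clause (setting $\mathcal{K}_n = \bar{\lambda}_n(\H)$ and verifying $G_n$-invariance and $U_n$-saturation) is slightly more explicit than the paper's, which leaves this to the reader, but the argument is the same.
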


\begin{proof}
Arguing as in the proof of Lemma \ref{l:ts-maps}, we have to show that for every sequence $\H_n\in \cl(\sub(G))$ converging to a limit $\H$, the sequence $\bar{\lambda}_n(\H_n)$ also converges to $\H$ as $n\to \infty$.

Let $H\in \H$, and choose a sequence $H_n\in \H_n$ that converges to $H$. By Lemma \ref{l:ts-maps} the sequence $(\lambda_n(H_n))$ also converges to $\H$. Since $\lambda_n(H_n)\in \overline{\lambda}_n(\H_n)$ and $H\in \H$ was arbitrary, this implies that $\H$ is contained in every cluster point of $\overline{\lambda}_n(\H_n)$.

To show the converse, let $K_n\in \overline{\lambda}_n(\H_n)$ be a sequence converging to some $K \in \sub(G)$. By Lemma \ref{l:closure-saturated}, for every $n$ there exists $H_n\in \H_n$ such that $H_n\cap G_n \le K_n \le \lambda_n(H_n)$. Up to taking a subsequence we may assume that $H_n$ converges to a limit $H$, which  belongs to $\H$ since $\H_n\to \H$. It is easy to see that $H_n\cap G_n$ also tends to $H$. Moreover, applying again Lemma \ref{l:ts-maps} we get that $\lambda_n(H_n)\to H$. Since $H_n\cap G_n \le K_n \le \lambda_n(H_n)$, it follows that $K_n\to H$ as well. Hence $K=H\in \H$. This shows that every cluster point of $(\H_n)$ is contained in $\H$, and therefore $\H_n$ converges to $\H$. \qedhere
\end{proof}

\begin{thm}\label{thm:no-urs-limit}
Given a bi-approximation $(G_n, U_n)$ of a locally compact group $G$, the following are equivalent:
\begin{enumerate}[label=(\roman*)]

\item \label{i:no-URS} the group $G$ has no non-trivial URS's;
\item \label{i:no-URS-sequence} for every sequence   $\H_n\in \cl(\sub(G_n))^{G_n}$ where each $\mathcal{H}_n$ is $U_n$-saturated, every cluster point of $(\H_n)$ in $\cl(\sub(G))$ contains $\{1\}$ or $G$;

\end{enumerate}
In particular, if $U_n$ is normal in $G_n$ and $G_n/U_n$ has no non-trivial URS's for every $n$, then $G$ has no non-trivial URS's.
\end{thm}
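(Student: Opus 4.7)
The plan is to follow the blueprint of Theorem \ref{thm:no-irs-limit} and Corollary \ref{c:no-irs-limit-normal}, with Proposition \ref{p:no-URS-limit} taking the place of Proposition \ref{p:no-IRS-limit}. The one additional ingredient on the topological side is the standard fact that every non-empty $G$-invariant closed subset of $\sub(G)$ contains at least one URS, which follows from Zorn's lemma applied to the poset of non-empty closed $G$-invariant subsets of $\sub(G)$ ordered by reverse inclusion, using compactness of $\cl(\sub(G))$.

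For the direction \ref{i:no-URS}$\Rightarrow$\ref{i:no-URS-sequence}, I would take a sequence $(\mathcal{H}_n)$ as in \ref{i:no-URS-sequence} and a cluster point $\mathcal{H}$ of $(\mathcal{H}_n)$ in $\cl(\sub(G))$. The first step is to check that $\mathcal{H}$ is $G$-invariant: any $g\in G$ lies in some $G_m$, so the sets $\mathcal{H}_n$ for $n\geq m$ are all $g$-invariant, and continuity of conjugation on $\cl(\sub(G))$ gives $g\cdot \mathcal{H}=\mathcal{H}$. Being non-empty and $G$-invariant, $\mathcal{H}$ contains a URS, which by \ref{i:no-URS} is $\{ \! \{1\} \! \}$ or $\{G\}$; in either case $\{1\}\in \mathcal{H}$ or $G\in \mathcal{H}$. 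Conversely, for \ref{i:no-URS-sequence}$\Rightarrow$\ref{i:no-URS}, I would apply Proposition \ref{p:no-URS-limit} to an arbitrary URS $\mathcal{H}$ of $G$, writing it as the limit of a sequence of $U_n$-saturated $G_n$-invariant closed subsets; by \ref{i:no-URS-sequence} the limit $\mathcal{H}$ contains $\{1\}$ or $G$, and minimality of $\mathcal{H}$ forces $\mathcal{H}=\{ \! \{1\} \! \}$ or $\mathcal{H}=\{G\}$.

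For the final assertion, assume $U_n\unlhd G_n$ and that $G_n/U_n$ has no non-trivial URS. The remark preceding the theorem identifies the $U_n$-saturated elements of $\cl(\sub(G_n))^{G_n}$ with $\cl(\sub(G_n/U_n))^{G_n/U_n}$; under the hypothesis, every non-empty such set must contain the URS $\{ \! \{1\} \! \}$ or $\{G_n/U_n\}$, which pulled back to $\sub(G_n)$ means $U_n\in \mathcal{H}_n$ or $G_n\in \mathcal{H}_n$. Given a convergent subsequence $\mathcal{H}_{n_k}\to \mathcal{H}$, passing to a further subsequence we may assume one of these two alternatives holds for all $k$; since $U_n\to \{1\}$ and $G_n\to G$ in $\sub(G)$ (by Definition \ref{def-bi-app} and Lemma \ref{lem-chab-prelim}), this yields $\{1\}\in \mathcal{H}$ or $G\in \mathcal{H}$. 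Thus condition \ref{i:no-URS-sequence} holds, and the conclusion follows from the equivalence.

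I do not foresee a real obstacle here. Once Proposition \ref{p:no-URS-limit} is in hand the topological argument is actually slightly cleaner than its measured counterpart, since there are no test functions to integrate against; the only point requiring care is verifying that cluster points of the $\mathcal{H}_n$ remain $G$-invariant (rather than merely $G_n$-invariant for each fixed $n$), which is handled by the fact that $G_n$ is open and $G=\bigcup G_n$.
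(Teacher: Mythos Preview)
Your proposal is correct and follows essentially the same approach as the paper: the equivalence is proved exactly as you describe (cluster points are $G$-invariant because $G=\bigcup G_n$, hence contain a URS by Zorn; the converse is immediate from Proposition \ref{p:no-URS-limit}). The paper's proof leaves the ``In particular'' clause implicit, but your argument for it---pulling back the trivial URS's of $G_n/U_n$ to $U_n$ or $G_n$ and using $U_n\to\{1\}$, $G_n\to G$---is precisely the intended one (and is spelled out in the same form in the proof of Theorem \ref{thm-homeo-Qp}).
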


\begin{proof}
Every cluster point of $(\H_n)$ with $\H_n \in \cl(\sub(G_n))^{G_n}$ must be $G$-invariant since $G = \cup G_n$, and hence contains a URS by Zorn's lemma. So \ref{i:no-URS} implies \ref{i:no-URS-sequence}, and the converse follows from Proposition \ref{p:no-URS-limit}.
\end{proof}

\section{Groups of piecewise affine homeomorphisms} \label{s:PL}

The goal of this section is to apply the results of Section \ref{s:as-limits} in order to prove Theorem \ref{thm-intro}.

\begin{defin} \label{def:PL-Qp}
A homeomorphism $g$ of $\Q_p$ is \emph{piecewise affine} if there exists a (possibly infinite) partition of $\Q_p$ into clopen subsets $(\mathcal{U}_i)_{i \geq 0}$ such that for all $i \geq 0$ the restriction  of $g$ on $\mathcal{U}_i$ coincides with the restriction of a map of the form $x \mapsto a_i x + b_i$, with $a_i \in \Q_p^{*}$ and $b_i \in \Q_p$.
\end{defin}

Note that the subsets $\mathcal{U}_i$ in the definition need not be $g$-invariant. We denote by $\mathrm{PL}(\Q_p)$ the group of piecewise affine homeomorphisms of $\Q_p$, and by $\mathrm{PL_c}(\Q_p)$ the (normal) subgroup of $\mathrm{PL}(\Q_p)$ consisting of compactly supported elements.

\subsection{Discrete groups of piecewise affine homeomorphisms}

Before proving Theorem \ref{thm-intro}, we need some intermediate results about certain countable subgroups of $\mathrm{PL_c}(\Q_p)$. The purpose of this paragraph is to establish them, by combining results from \cite{Nek-fp, Dud-Med, LBMB-subdyn}.


\begin{defin} \label{def-lambdap}
We denote by $\Lambda_p$ the group of homeomorphisms of $\Z_p$ acting piecewise by maps of the form $x \mapsto ax + b$, with $a \in p^{\Z}$ and $b \in \Z[1/p]$.
\end{defin}




The groups $\Lambda_p$ are member of a larger family of groups studied in \cite{Nek-cuntz, Nek-fp} (with different terminology). To any \emph{self-similar group} $G$ acting on a rooted tree, Nekrashevych associates a group $\mathcal{V}_G$ of homeomorphisms of the boundary of the tree, which contains a copy of the \emph{Higman--Thompson group} $V_p$ whose definition is recalled below \cite[Def.\ 3.2]{Nek-fp}. In the present situation the rooted tree is a $p$-ary tree, whose boundary is identified with $\Z_p$ via $\left\{0,\ldots,p-1\right\}^{\N} \stackrel{\sim}{\rightarrow} \Z_p$, $(a_n) \mapsto \sum a_n p^n$. If $A_p \le \Lambda_p$ is the cyclic subgroup generated by $\alpha_p: x \mapsto x+1$, the action of $A_p$ on $\Z_p$ extends to a self-similar action on the $p$-ary tree, called the \emph{adding machine} in \cite[Ex.\ 3.2]{Nek-fp}. The copy of the Higman--Thompson group $V_p$ inside $\Lambda_p$ consists of all elements that locally preserve the lexicographic order of $\left\{0,\ldots,p-1\right\}^{\N}$, and $\Lambda_p$ is generated by $V_p$ together with $A_p$.

In particular Theorems 4.7 and 4.8 in \cite{Nek-fp} imply the following:




\begin{prop} \label{prop-deriv-simple-nek}
The group $ \Lambda_p'$ is simple, and $ \Lambda_p /  \Lambda_p'$ is isomorphic to $\Z$ if $p=2$ and to $\Z / 2\Z \times \Z$ for odd $p$.

More precisely, the map $x \mapsto (x \mod V_p',0)$ for $x \in V_p$, and $\alpha_p \mapsto (0,1)$, extends to a morphism of $ \Lambda_p$, and induces an isomorphism $ \Lambda_p /  \Lambda_p' \simeq V_p / V_p' \times \Z$.
\end{prop}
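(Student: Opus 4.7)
The plan is to deduce the proposition by identifying $\Lambda_p$ with one of the groups to which Nekrashevych's results in \cite{Nek-fp} apply, and then translating Theorems 4.7 and 4.8 of that paper into the desired statement.

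First, I would make the identification precise. Under the homeomorphism $\{0,\ldots,p-1\}^{\N} \stackrel{\sim}{\rightarrow} \Z_p$ given by $(a_n)\mapsto \sum a_n p^n$, the map $\alpha_p\colon x\mapsto x+1$ corresponds exactly to the adding machine on the $p$-ary rooted tree. The self-similar group $A_p=\langle \alpha_p\rangle$ is contracting with trivial nucleus other than $\{1,\alpha_p^{\pm 1}\}$, which is the hypothesis under which Nekrashevych's construction of $\mathcal{V}_{A_p}$ and his structure theorems apply. I would then check that $\Lambda_p$, as defined in Definition \ref{def-lambdap}, coincides with $\mathcal{V}_{A_p}$: every piecewise affine map with slopes in $p^{\Z}$ and translations in $\Z[1/p]$ is, on each piece, a composition of a prefix-exchange (which lies in $V_p$) with an iterate of $\alpha_p$ acting on the appropriate subtree, and conversely every element of $\mathcal{V}_{A_p}$ has this form. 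This is the content of Nekrashevych's description of $\mathcal{V}_G$ as $V_p$-piecewise elements of $G$.

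Once this identification is set up, simplicity of $\Lambda_p'$ is immediate from \cite[Thm.~4.7]{Nek-fp}, which asserts that $\mathcal{V}_G'$ is simple whenever $G$ is a finitely generated contracting self-similar group acting faithfully on the tree, conditions satisfied by the adding machine. For the abelianization I would invoke \cite[Thm.~4.8]{Nek-fp}, which gives a natural short exact sequence expressing $\mathcal{V}_G/\mathcal{V}_G'$ in terms of $V_p/V_p'$ and the abelianization of $G$. For $G=A_p\simeq\Z$, this splits as a direct product $\mathcal{V}_{A_p}/\mathcal{V}_{A_p}'\simeq V_p/V_p' \times \Z$, with $V_p$ contributing the image of $x\mapsto x \bmod V_p'$ and $\alpha_p$ mapping to the generator of the $\Z$-factor. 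The splitting follows because $\alpha_p$ and any element of $V_p$ can be detected independently through the two projections (the $\Z$-component being essentially a degree or rotation number, and the $V_p$-component being detected on sufficiently deep levels of the tree where $\alpha_p$ acts as an element of $V_p$ with prescribed class mod $V_p'$).

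To conclude, I would combine this with the classical computation of the abelianization of the Higman--Thompson group $V_p$ due to Brown: $V_2/V_2'$ is trivial (so $V_2$ is already simple) and $V_p/V_p'\simeq \Z/2\Z$ for odd $p$, with the quotient map $x\mapsto x\bmod V_p'$ corresponding to the sign of the permutation that $x$ induces on its domain partition. Substituting gives $\Lambda_p/\Lambda_p'\simeq \Z$ for $p=2$ and $\Lambda_p/\Lambda_p'\simeq \Z/2\Z\times\Z$ for odd $p$, which is the stated isomorphism. The main potential obstacle is the careful verification that $\Lambda_p$ as defined here really agrees with $\mathcal{V}_{A_p}$ and that the hypotheses of \cite[Thm.~4.7, 4.8]{Nek-fp} are met for the adding machine; once that bookkeeping is done, the proposition is essentially a direct translation.
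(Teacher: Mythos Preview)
Your proposal is correct and follows essentially the same approach as the paper: the paper simply states that the proposition is a consequence of Theorems 4.7 and 4.8 in \cite{Nek-fp}, after having identified $\Lambda_p$ with Nekrashevych's group $\mathcal{V}_{A_p}$ for the adding machine in the paragraph preceding the statement. Your write-up is more detailed than the paper's (which gives no proof at all beyond the citation), but the route is identical.
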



Dudko and Medynets exhibited sufficient dynamical conditions on a countable group of homeomorphisms ensuring the absence of non-trivial IRS's, and applied their results to the family of Higman-Thompson groups \cite{Dud-Med}. Here we apply their results to the group $ \Lambda_p'$. The arguments are essentially the same as in \cite{Dud-Med}, so we only give a sketch of proof, and refer to \cite[Sec.\ 3]{Dud-Med} for details.

\begin{prop} \label{prop-lambda'-no-irs}
$ \Lambda_p'$ has no non-trivial IRS's.
\end{prop}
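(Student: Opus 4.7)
The strategy follows the Dudko--Medynets framework for ruling out non-trivial IRS's in groups acting on a Cantor space. We will apply it to the action of $\Lambda_p'$ on $X := \Z_p$. The abstract criterion to be invoked requires: (i) $\Lambda_p'$ is simple; (ii) the action $\Lambda_p' \acts X$ is minimal; and (iii) for every non-empty clopen $U \subseteq X$, the rigid stabilizer
\[
\rist_{\Lambda_p'}(U) = \{g \in \Lambda_p' \colon \supp(g) \subseteq U\}
\]
contains a subgroup acting minimally (indeed, extremely proximally) on $U$. Under these hypotheses, the arguments of \cite[\S 3]{Dud-Med} yield that any IRS $\mu$ of $\Lambda_p'$ is a convex combination of $\delta_1$ and $\delta_{\Lambda_p'}$.

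Item (i) is Proposition \ref{prop-deriv-simple-nek}. For item (ii), recall that the Higman--Thompson group $V_p \le \Lambda_p$ already acts minimally, in fact extremely proximally, on $\Z_p$; this extreme proximality only uses commutators of bijections of cylinders, hence already takes place inside $V_p' \le \Lambda_p'$. For item (iii), we exploit the self-similar structure of $\Z_p$: given any basic cylinder $U = a + p^n \Z_p$, there exists an element $v_U \in V_p$ mapping $\Z_p$ homeomorphically onto $U$ while fixing $\Z_p \setminus U$ pointwise, so that $v_U V_p v_U^{-1}$ is a copy of $V_p$ sitting inside $\rist_{\Lambda_p}(U)$. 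Taking its derived subgroup yields a copy of $V_p'$ contained in $\rist_{\Lambda_p'}(U)$ and acting on $U$ minimally (by transport of structure from the minimal action of $V_p'$ on $\Z_p$). An arbitrary clopen $U$ is a finite disjoint union of such basic cylinders, so the condition is verified in general.

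Once (i)--(iii) are in hand, the Dudko--Medynets mechanism proceeds in two steps. First, a zero--one type argument shows that for a $\mu$-random subgroup $H \le \Lambda_p'$, the fixed-point set $\fix_X(H)$ is $\mu$-a.s. either empty or all of $X$; in the latter case $H$ is trivial by topological freeness of the action on the trivial part. Second, when $\fix_X(H) = \varnothing$, one uses condition (iii) together with a commutator argument (supporting elements of $H$ on small enough clopens and conjugating against $\rist_{\Lambda_p'}(U)$) to produce inside $H$ a non-trivial normal subgroup of $\Lambda_p'$; simplicity then forces $H = \Lambda_p'$. Combining, $\mu$ is supported on $\{1\} \cup \{\Lambda_p'\}$, which is the desired conclusion.

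The main obstacle is the verification of item (iii): it is essential that the \emph{derived} group $\Lambda_p'$, and not merely $\Lambda_p$, have rich rigid stabilizers on arbitrary clopens. The key input is that the conjugate $v_U V_p v_U^{-1}$ of $V_p$ lies inside $\rist_{\Lambda_p}(U)$, so taking commutators of its elements produces elements that automatically lie in $\Lambda_p'$ and remain supported in $U$. The description of $\Lambda_p/\Lambda_p'$ in Proposition \ref{prop-deriv-simple-nek} is what makes this pass to the derived subgroup transparent. Once this is secured, the transfer of the Dudko--Medynets argument from $V_p$ (the case treated in \cite{Dud-Med}) to $\Lambda_p'$ is routine.
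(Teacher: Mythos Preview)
Your outline diverges from the paper's proof and, as written, has a genuine gap.

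The paper does not argue via rigid stabilizers and a zero--one dichotomy on fixed-point sets. It goes through \emph{characters}: one first considers the subgroup $\Delta = (\Lambda_p')_0^0$ of elements of $\Lambda_p'$ trivial near $0$, observes that $\Delta$ is an increasing union of copies of $\Lambda_p'$ (hence simple by Proposition~\ref{prop-deriv-simple-nek}), and checks that the $\Delta$-action on $\Z_p\setminus\{0\}$ is \emph{compressible} in the sense of \cite[Def.~2.5]{Dud-Med}. Then \cite[Th.~2.9]{Dud-Med} gives that $\Delta$ has no proper character, and \cite[Th.~2.11]{Dud-Med} (applied to the pair $\Delta\le\Lambda_p'$, after checking a conjugacy-class hypothesis) lifts this to $\Lambda_p'$. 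Absence of non-trivial IRS's follows from absence of proper characters.

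The criterion you state---(i) simplicity, (ii) minimality, (iii) rich rigid stabilizers---is not a theorem in \cite{Dud-Med}; \S3 there applies the character results of \S2 to Higman--Thompson groups and does not contain the ``zero--one plus commutator'' mechanism you describe. In particular, your step ``$\fix_X(H)$ is $\mu$-a.s.\ either empty or all of $X$'' is not justified: equivariance only gives that $x\mapsto\mu(\{H:H\le G_x\})$ is constant along orbits, not that this constant is $0$ or $1$. Likewise, in the second step, commutators $[h,g]$ with $h\in H$ and $g\in\rist_{\Lambda_p'}(U)$ need not lie in $H$, so it is unclear how you ``produce inside $H$ a non-trivial normal subgroup of $\Lambda_p'$''. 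Rigid-stabilizer approaches to IRS rigidity do exist, but you would need to cite a precise statement rather than attribute this to \cite{Dud-Med}.

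There is also a local error in your verification of (iii): an element $v_U\in V_p$ is a bijection of $\Z_p$, so it cannot map $\Z_p$ onto a proper clopen $U$ while fixing $\Z_p\setminus U$ pointwise. What is true (and is used in the paper's URS argument, Proposition~\ref{prop-urs-lambdap}) is that the subgroup of $\Lambda_p$ supported in $a+p^n\Z_p$ is isomorphic to $\Lambda_p$ via the affine identification $x\mapsto a+p^n x$ of $\Z_p$ with the coset---an identification which is not an element of $V_p$---and that its derived subgroup coincides with the subgroup of $\Lambda_p'$ supported there.
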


\begin{proof}
Let us consider the subgroup $ \Delta \leq  \Lambda_p'$ consisting of elements of $ \Lambda_p'$ acting trivially on a neighbourhood of $0$. One can check that $ \Delta$ can be written as an increasing union of subgroups all isomorphic to $ \Lambda_p'$, so that $ \Delta$ is also a simple group (Proposition \ref{prop-deriv-simple-nek}). Since the action of $ \Delta$ on $\Z_p \setminus \left\{0\right\}$ is easily seen to be compressible \cite[Def.\ 2.5]{Dud-Med}, it follows from \cite[Th. 2.9]{Dud-Med} that $ \Delta$ has no proper character. Now applying \cite[Th. 2.11]{Dud-Med} to the group $ \Lambda_p'$ and the subgroup $ \Delta$ (the verification of the assumption on non-trivial conjugacy classes is easy, and we leave it), we obtain that the group $ \Lambda_p'$ has no proper character either, and hence no non-trivial IRS's.
\end{proof}

The next result is based on an application of \cite[Cor.\ 3.12]{LBMB-subdyn}. We will use the following notation: for a group $G$ acting on a space $X$ and a point $x \in X$, we will write $G_x^0$ for the subgroup of $G$ consisting of elements acting trivially on a neighbourhood of $x$.

Recall that any minimal action of a locally compact group $G$ on a compact space gives rise to a URS of $G$, called the stabilizer URS of the action \cite[Prop.\ 1.2]{Glas-Wei}.

\begin{prop} \label{prop-urs-lambdap}
The stabilizer URS of $ \Lambda_p'$ associated to the action on $\Z_p$ is exactly the collection of subgroups $(\Lambda_p')_x^0$, when $x$ varies in $\Z_p$; and it is the only non-trivial URS of $ \Lambda_p'$.
\end{prop}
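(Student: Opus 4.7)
The plan is to deduce this result as a direct application of \cite[Cor.\ 3.12]{LBMB-subdyn}, which for a countable group $\Gamma$ admitting a faithful, minimal, and extremely proximal action on a compact space $X$ (and satisfying mild additional hypotheses such as simplicity) provides a precise description of its URS's: the stabilizer URS of the action is the collection of germ-stabilizers, and it is the unique non-trivial URS of $\Gamma$. Recall that an action is \emph{extremely proximal} if for every proper clopen $C \subsetneq X$ and every non-empty clopen $U \subseteq X$, there exists $g \in \Gamma$ with $g(C) \subseteq U$.

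The first step is to verify that $\Lambda_p' \acts \Z_p$ is minimal and extremely proximal. Minimality follows from the fact that the Higman--Thompson piece $V_p \le \Lambda_p$ acts transitively on the cylinder subsets of $\Z_p$ of any fixed depth, and combining this with translations by the adding machine $\alpha_p$ produces enough elements landing in $\Lambda_p'$ to act transitively on this family of clopens. For extreme proximality, given a proper clopen $C \subsetneq \Z_p$ and a non-empty clopen $U \subseteq \Z_p$, one first uses the flexibility of $V_p$ to map $C$ into a proper sub-cylinder $U_0 \subsetneq U$; the resulting element lies in $\Lambda_p$, and one then corrects it by an element of $\Lambda_p$ supported outside $C \cup U_0$ (and hence not affecting the containment $g(C) \subseteq U$) to land in $\Lambda_p'$, using the explicit form of the abelianization from Proposition \ref{prop-deriv-simple-nek}. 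In particular the action is faithful since $\Lambda_p'$ is simple and non-trivial (Proposition \ref{prop-deriv-simple-nek}).

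The second step is to invoke \cite[Cor.\ 3.12]{LBMB-subdyn}. The simplicity of $\Lambda_p'$ (Proposition \ref{prop-deriv-simple-nek}) guarantees that the only fixed points of $\Lambda_p' \acts \sub(\Lambda_p')$ are $\{1\}$ and $\Lambda_p'$ itself, so the only trivial URS's are $\{\!\{1\}\!\}$ and $\{\Lambda_p'\}$. The corollary then asserts simultaneously that the collection $\{(\Lambda_p')_x^0 : x \in \Z_p\}$ is a URS, that it coincides with the stabilizer URS of the action, and that every other URS of $\Lambda_p'$ is trivial.

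The main obstacle I anticipate is verifying that the elements witnessing extreme proximality can indeed be chosen within the derived subgroup $\Lambda_p'$ rather than merely in $\Lambda_p$; this is a short but non-trivial check requiring explicit use of the abelianization map described in Proposition \ref{prop-deriv-simple-nek}. Once this is in place, the remaining statements follow by direct citation of \cite[Cor.\ 3.12]{LBMB-subdyn}.
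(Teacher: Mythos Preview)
Your overall strategy—invoking \cite[Cor.\ 3.12]{LBMB-subdyn}—is the same as the paper's, but you have misidentified the hypotheses of that corollary, and this leaves a genuine gap.

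The conditions the paper actually verifies for \cite[Cor.\ 3.12]{LBMB-subdyn} are: (a) for every basic clopen $x + p^n\Z_p$, the rigid stabilizer (the subgroup of $\Lambda_p'$ consisting of elements supported in that clopen) is simple—in fact isomorphic to $\Lambda_p'$ itself, via the description of the abelianisation map in \cite[Th.\ 4.8]{Nek-fp}; and (b) point stabilizers for the action on $\Z_p$ are maximal subgroups of $\Lambda_p'$, which the paper deduces from $n$-transitivity of $\Lambda_p'$ on each of its orbits. You verify neither of these. Extreme proximality is indeed a central notion in \cite{LBMB-subdyn} and is related to these conditions, but it is not the hypothesis of Cor.\ 3.12 as stated, and it does not by itself yield maximality of point stabilizers; that is a genuinely separate check which your proposal omits.

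There is also a smaller gap in the first assertion. The fact that the stabilizer URS is \emph{exactly} the set $\{(\Lambda_p')_x^0 : x \in \Z_p\}$ (rather than merely its closure, or some other minimal subset of the closure of the point stabilizers) is not packaged into the corollary. The paper obtains it by first observing that for a piecewise-affine homeomorphism every non-isolated fixed point lies in a pointwise-fixed neighbourhood—because a non-identity affine map has at most one fixed point in any ball—and then applying \cite[Prop.\ 2.10]{LBMB-subdyn}. Your proposal does not supply this observation or an equivalent one.
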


\begin{proof}
Given $\gamma \in \Lambda_p'$ and $x \in \Z_p$ a fixed point of $\gamma$, either $x$ is an isolated fixed point, or $\gamma$ acts trivially on a neighbourhood of $x$ (because a non-identity affine map has either $0$ or $1$ fixed point in a given clopen subset of $\Z_p$). Therefore the subgroups in the stabilizer URS are exactly the $(\Lambda_p')_x^0$ according to \cite[Prop. 2.10]{LBMB-subdyn}.

To show that this is the only non-trivial URS, we check that the conditions of \cite[Cor.\ 3.12]{LBMB-subdyn} are satisfied. Consider a coset $x + p^n \Z_p\subset \Z_p$. The subgroup $H$ of $\Lambda_p$  consisting of elements supported in the coset $x + p^n \Z_p$ is isomorphic to $\Lambda_p$.  It follows from the description of the abelianisation map $\Lambda_p\to \Lambda_p/ \Lambda_p'$ given in Theorem 4.8 in \cite{Nek-fp} that its derived subgroup $H'\simeq \Lambda_p'$  coincides with the subgroup of $\Lambda_p'$ consisting of elements supported in the coset $x + p^n \Z_p$. We conclude the latter is isomorphic to $ \Lambda_p'$, and hence is simple by Proposition \ref{prop-deriv-simple-nek}. Moreover point stabilizers for the action on $\Z_p$ are maximal subgroups of $ \Lambda_p'$ (for instance because $ \Lambda_p'$ acts $n$-transitively for all $n$ on each of its orbits in $\Z_p$). Hence we can apply  \cite[Cor.\ 3.12]{LBMB-subdyn} to the action of $ \Lambda_p'$ on $\Z_p$, which gives the conclusion. 
\end{proof}


\subsection{Locally compact groups of piecewise affine homeomorphisms} \label{subsec:LC-PL}

\begin{defin} \label{def-gamma-PL}
Let $\Gamma_p$ be the subgroup of $\mathrm{PL_c}(\Q_p)$ consisting of elements such that the coefficients $a_i,b_i$ from Definition \ref{def:PL-Qp} satisfy $a_i \in p^{\Z}$ and $b_i \in \Z[1/p]$ for all $i$. In other words, $\Gamma_p$ is the set of $g$ such that there exist disjoint compact open subsets $\mathcal{U}_1,\ldots,\mathcal{U}_n \subset \Q_p$ such that $g$ is supported in the union, and $g$ acts on $\mathcal{U}_i$ by $x \mapsto a_i x + b_i$ with $a_i \in p^{\Z}$ and $b_i \in \Z[1/p]$.
\end{defin}

For $n \geq 0$, we denote by $\X_n$ the set of $p$-adic numbers with valuation equal to $-(n+1)$, i.e.\ the complement of $p^{-n} \Z_p$ in $p^{-(n+1)} \Z_p$. 






\begin{defin} \label{def-G_F}
Given a family $\F = (F_n)$ of finite subgroups $F_n \leq \Gamma_p$ such that $F_n$ is supported inside $\X_n$, we denote by $G_{\F} \leq \mathrm{PL}(\Q_p)$ the homeomorphisms $g$ such that:
\begin{enumerate}[label=(\roman*)]
	\item there exists $N \geq 0$ such that $g$ preserves $p^{-N} \Z_p$, and the restriction of $g$ to $p^{-N} \Z_p$ is piecewise $ax + b$, with $a \in p^{\Z}$ and $b \in \Z[1/p]$;
	\item the restriction of $g$ to $\X_n$ coincides with an element of $F_n$ for all $n \geq N$.
\end{enumerate}
\end{defin}



Note that $\Gamma_p$ is a subgroup of $G_{\F}$, and $\Gamma_p$ is actually the group of compactly supported elements of $G_{\F}$. Note that since the $F_n$ have disjoint support, we can naturally view the product $\prod F_n$ as a subgroup of $\mathrm{PL}(\Q_p)$, and it follows from Definition \ref{def-G_F} that $G_{\F}$ is generated by $\Gamma_p$ and $\prod F_n$.

We endow $G_{\F}$ with the topology for which the sets $(\mathcal{V}_{N}(g))_{N \geq 1}$, with $\mathcal{V}_{N}(g) = g \prod_{n\geq N} F_n$, form a basis of neighbourhoods of $g \in G_{\F}$. Since every $g \in G_{\F}$ normalizes all but finitely many $F_n$, this indeed defines a group topology.



\begin{thm} \label{thm-homeo-Qp}
Assume that $F_n \leq \Gamma_p'$ for all $n \geq 0$. Then the (open, normal) subgroup of $G_{\F}$ generated by $\Gamma_p'$ and $\prod F_n$ has no non-trivial URS's and no non-trivial IRS's.
\end{thm}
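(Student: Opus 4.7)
The plan is to apply the bi-approximation framework of Section~\ref{s:as-limits}. Let $U_n := \prod_{k \geq n} F_k$ and let $\Gamma_p'(p^{-n}\Z_p)$ denote the subgroup of $\Gamma_p'$ consisting of elements supported in $p^{-n}\Z_p$. Every $f \in F_k$ with $k \geq n$ is supported inside $\X_k \subset \Q_p \setminus p^{-n}\Z_p$, so the two subgroups $\Gamma_p'(p^{-n}\Z_p)$ and $U_n$ commute elementwise and $G_n := \Gamma_p'(p^{-n}\Z_p) \times U_n$ is a direct product containing $U_n$ as a compact open normal subgroup. One checks directly that $(G_n)$ is an increasing chain of open subgroups of $G$ with $\bigcup G_n = G$, and $U_n \to \{1\}$ since $\bigcap U_n = \{1\}$, so $(G_n, U_n)$ is a bi-approximation in the sense of Definition~\ref{def-bi-app}. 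Moreover the quotient $G_n/U_n \cong \Gamma_p'(p^{-n}\Z_p)$ is isomorphic to $\Lambda_p'$ via conjugation by $x \mapsto p^n x$. The IRS part of the statement then follows immediately from Corollary~\ref{c:no-irs-limit-normal} combined with Proposition~\ref{prop-lambda'-no-irs}.

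For URS's, the corresponding corollary of Theorem~\ref{thm:no-urs-limit} cannot be invoked, because $\Lambda_p'$ does admit the non-trivial stabilizer URS $\mathcal{S} = \{(\Lambda_p')_x^0 : x \in \Z_p\}$ of Proposition~\ref{prop-urs-lambdap}, so the plan is to verify condition~(ii) of Theorem~\ref{thm:no-urs-limit} by hand. Take any sequence $(\H_n)$ of $U_n$-saturated $G_n$-invariant closed subsets of $\sub(G_n)$ and any cluster point $\H$ in $\cl(\sub(G))$, and let $\mathcal{L}_n \subseteq \sub(\Lambda_p')$ be the $\Lambda_p'$-invariant closed subset corresponding to $\H_n$ under the quotient identification. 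By Zorn's lemma and Proposition~\ref{prop-urs-lambdap}, each $\mathcal{L}_n$ must contain one of the three URS's $\{\!\{1\}\!\}$, $\{\Lambda_p'\}$, or $\mathcal{S}$. If $\{1\} \in \mathcal{L}_n$ infinitely often along the subsequence realising $\H$, then $U_n \in \H_n$ there and $U_n \to \{1\}$, so $\{1\} \in \H$; likewise if $\Lambda_p' \in \mathcal{L}_n$ infinitely often, then $G_n \in \H_n$ and $G_n \to G$ puts $G \in \H$.

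The remaining and essential case is when eventually $\mathcal{L}_n$ contains $\mathcal{S}$ but neither $\{1\}$ nor $\Lambda_p'$. Then $\H_n$ contains the pullback $\tilde{\mathcal{S}}_n := \{H_{n,y} : y \in p^{-n}\Z_p\}$, where $H_{n,y} := (\Gamma_p'(p^{-n}\Z_p))_y^0 \cdot U_n$ is the preimage under $G_n \to G_n/U_n$ of the germ stabilizer corresponding to $y$. The crux of the argument is to choose $y_n \in p^{-n}\Z_p$ with $|y_n|_p \to \infty$ and show $H_{n,y_n} \to G$ in the Chabauty topology. Any fixed $\gamma \in \Gamma_p'$ is supported in some $p^{-M}\Z_p$, so as soon as $n \geq M$ and $|y_n|_p > p^M$ the point $y_n$ lies outside the support of $\gamma$; hence $\gamma$ fixes a neighbourhood of $y_n$ and belongs to $\Gamma_p'(p^{-n}\Z_p)$, showing $\gamma \in H_{n,y_n}$ eventually and therefore $\Gamma_p' \subseteq \liminf H_{n,y_n}$. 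Since the Chabauty liminf of a sequence of subgroups is a closed subgroup of $G$, and $\Gamma_p'$ is dense in $G$ (any $g = \gamma \cdot (f_k) \in G$ is approximated by the compactly supported truncations $\gamma \cdot (f_k)_{k<N} \in \Gamma_p'$, which differ from $g$ by an element of the open neighbourhood $U_N$), we conclude $\liminf H_{n,y_n} = G$, so $G \in \H$. The main obstacle is precisely this last case, where the combination of the ``escape to infinity'' $|y_n|_p \to \infty$ and the density of $\Gamma_p'$ in $G$ is what inflates the pulled-back germ stabilizers all the way to $G$ in the Chabauty limit, preventing any non-trivial URS from surviving the bi-approximation.
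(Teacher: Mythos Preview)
Your argument is correct and follows essentially the same route as the paper: the same bi-approximation $(G_n,U_n)$, the same appeal to Corollary~\ref{c:no-irs-limit-normal} and Proposition~\ref{prop-lambda'-no-irs} for IRS's, and the same case split via Proposition~\ref{prop-urs-lambdap} for URS's, with the crucial point being that the pulled-back germ stabilizers Chabauty-converge to $G$. Your proof of this last convergence (eventually $\gamma\in H_{n,y_n}$ for each $\gamma\in\Gamma_p'$, then density of $\Gamma_p'$ plus closedness of the Chabauty $\liminf$) is a mild repackaging of the paper's Lemma~\ref{lem-fat-stab-conv-G}, which instead builds explicit truncations $g_n\to g$ for every $g\in G$; the two arguments are interchangeable. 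One small point you pass over: the identification $\Gamma_p'(p^{-n}\Z_p)\cong\Lambda_p'$ ``via conjugation by $x\mapsto p^n x$'' tacitly uses that $\Gamma_p'\cap\Gamma_{p,n}=\Gamma_{p,n}'$, which is exactly the fact the paper extracts from \cite[Th.~4.8]{Nek-fp} when verifying that $(G_n)$ is increasing.
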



\begin{remark}
It is easy to see that any finite alternating group can be realized as a group $F_n$ as in Definition \ref{def-G_F}. Consequently the same holds for any finite group, so that Theorem \ref{thm-intro} follows from Theorem \ref{thm-homeo-Qp}.
\end{remark}

\begin{proof}[Proof of Theorem \ref{thm-homeo-Qp}]
We denote by $G$ the subgroup of $G_{\F}$ generated by $\Gamma_p'$ and $\prod F_n$. For $n\geq 0$, we let $\Gamma_{p,n}$ be the subgroup of $\Gamma_p$ supported in $p^{-n} \Z_p$. Observe that all $\Gamma_{p,n}$ are isomorphic to each other (actually conjugated in $\mathrm{PL}(\Q_p)$); and isomorphic to the group $\Lambda_p$ from Definition \ref{def-lambdap}. Note also that $\Gamma_p$ is the increasing union of all $\Gamma_{p,n}$; and hence $\Gamma_p'$ is also the increasing union of $\Gamma_{p,n}'$.


Let $G_n$ be the subgroup of $G$ generated by $\Gamma_{p,n}'$ and $U_n = \prod_{k \geq n} F_k$. Note that since $\Gamma_{p,n}'$ and $U_n$ have disjoint support, the subgroup $G_n$ splits as a direct product $\Gamma'_{p,n} \times U_n$. 

\begin{lem}
$(G_n,U_n)$ is a bi-approximation of the group $G$.
\end{lem}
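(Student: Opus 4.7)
The plan is to check the four defining conditions of a bi-approximation: each $G_n$ is open, the sequence $(G_n)$ is increasing with union $G$, the sequence $(U_n)$ converges to $\{1\}$ in $\sub(G)$, and $\bigcup U_n$ is relatively compact.

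Openness and the conditions on $(U_n)$ are essentially formal. By the definition of the topology on $G_\F$, the sets $\prod_{k\geq N}F_k$ form a basis of identity neighborhoods, so each $U_n$ is an open subgroup; since $U_n\le G_n$ this gives openness of $G_n$. Moreover each $U_n$ is compact (a product of finite groups carrying the product topology, which agrees with the topology inherited from $G$), the sequence $(U_n)$ is decreasing, and $\bigcap_n U_n=\{1\}$ since $\bigcap_n\bigcup_{k\geq n}\X_k=\varnothing$. Remark \ref{rmq-case-Un-monot} then yields $U_n\to \{1\}$ in $\sub(G)$, while $\bigcup U_n=U_0$ is itself compact.

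The inclusion $G_n\le G_{n+1}$ reduces, using $\Gamma_{p,n}'\le \Gamma_{p,n+1}'$ and $U_n=F_n\times U_{n+1}$, to the assertion $F_n\le \Gamma_{p,n+1}'$. By hypothesis $F_n\le \Gamma_p'$, and by support $F_n\le \Gamma_{p,n+1}$, so the content is the identity $\Gamma_p'\cap \Gamma_{p,n+1}=\Gamma_{p,n+1}'$. For the union, I would write an arbitrary $g\in G$ as a finite word $\gamma_1 u_1\cdots \gamma_m u_m$ with $\gamma_i\in \Gamma_p'$ and $u_i\in \prod_k F_k$, choose $N$ large enough that every $\gamma_i$ is supported in $p^{-N}\Z_p$, and split $u_i=v_iw_i$ with $v_i\in \prod_{k<N}F_k\le \Gamma_p'$ and $w_i\in U_N$. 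Since $\gamma_i$ and $v_j$ are supported in $p^{-N}\Z_p$ while every $w_i$ is supported in its complement, the elements $w_i$ commute with all other factors and may be collected on the right, yielding $g=\gamma\cdot u$ with $\gamma\in \Gamma_p'\cap \Gamma_{p,N}$ and $u\in U_N$. Invoking the same identity gives $\gamma\in \Gamma_{p,N}'$ and hence $g\in G_N$.

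The technical heart is thus the identity $\Gamma_p'\cap \Gamma_{p,n+1}=\Gamma_{p,n+1}'$, equivalently the fact that the inclusion $\Gamma_{p,n+1}\hookrightarrow \Gamma_p$ is injective on abelianizations. I would establish it by producing a homomorphism $\Gamma_p\to V_p/V_p'\times \Z$ whose restriction to each $\Gamma_{p,k}$ agrees, under the identification $\Gamma_{p,k}\cong \Lambda_p$, with the abelianization map of Proposition \ref{prop-deriv-simple-nek}; both factors (the $V_p$-type permutation sign and the adding-machine degree) extend compatibly across the nested inclusions $\Gamma_{p,k}\le \Gamma_{p,k+1}$, which is the expected place where the explicit piecewise-affine description of the generators is put to work.
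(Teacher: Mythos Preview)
Your argument is correct and follows essentially the same route as the paper: reduce the inclusion $G_n\le G_{n+1}$ to $F_n\le\Gamma_{p,n+1}'$, and reduce that to the identity $\Gamma_p'\cap\Gamma_{p,n+1}=\Gamma_{p,n+1}'$, which the paper obtains directly from Nekrashevych's description of the abelianisation (Theorem~4.8 in \cite{Nek-fp}, restated here as Proposition~\ref{prop-deriv-simple-nek}); your sketch of how to prove this identity is just an unpacking of that citation.

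One simplification worth noting: your argument for $\bigcup_n G_n=G$ is more laborious than necessary, and invokes the key identity a second time. Once you know $(G_n)$ is increasing, $\bigcup_n G_n$ is a subgroup; it contains $\prod_k F_k=U_0\le G_0$ and it contains $\Gamma_p'=\bigcup_n\Gamma_{p,n}'$ (the latter equality holding simply because $\Gamma_p=\bigcup_n\Gamma_{p,n}$ is an increasing union, so every commutator in $\Gamma_p$ already lies in some $\Gamma_{p,n}'$). Since these two sets generate $G$, you are done without decomposing a general element of $G$ as a word.
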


\begin{proof}
That $G_n$ is open in $G$ is clear. We have to argue that the sequence $(G_n)$ is increasing. The description of $\Gamma_{p,n+1}'$ inside $\Gamma_{p,n+1}$ given in Theorem 4.8 from \cite{Nek-fp} implies the equality $\Gamma_{p,n+1} \cap \Gamma_{p}' = \Gamma_{p,n+1}'$. Since $F_n \leq \Gamma_{p}'$ by our assumption and $F_n \leq \Gamma_{p,n+1}$ by definition, we deduce that $F_n$ lies inside $\Gamma_{p,n+1}'$. Therefore $G_{n+1}$ contains both $F_n$ and $U_{n+1}$, and hence $U_n$. Since $G_{n+1}$ also contains $\Gamma_{p,n}'$, we have verified the inclusion $G_n \leq G_{n+1}$. Note that $\cup G_n$ contains both $\Gamma_p'$ and $\prod F_n$, and hence is equal to $G$. The condition on $U_n$ is also satisfied because $(U_n)$ is decreasing and $\cap U_n = 1$ (see Remark \ref{rmq-case-Un-monot}).
\end{proof}

Since $U_n$ is a normal subgroup of $G_n$ and the discrete quotient $G_n / U_n$ is isomorphic to $\Lambda_p'$, $G_n / U_n$ has no non-trivial IRS's by Proposition \ref{prop-lambda'-no-irs}. We can therefore apply Corollary \ref{c:no-irs-limit-normal} to the bi-approximation $(G_n,U_n)$ of $G$, and deduce that $G$ has no non-trivial IRS's.



We now have to prove the absence of non-trivial URS's. Recall that since $U_n$ is normal in $G_n$, the subgroups of $G_n$ which are $U_n$-saturated are the subgroups containing $U_n$. For $n\geq 1$, let $\H_n \in \cl( \sub(G_n))^{G_n}$ be a closed $G_n$-invariant $U_n$-saturated subset. Let $\K_n \in \cl(\sub(\Gamma_{p,n+1}'))^{\Gamma_{p,n+1}'}$ such that $\H_n$ is the preimage of $\K_n$ under the quotient map. The subset $\K_n$ must contain a URS of $\Gamma_{p,n+1}'$, so we deduce from Proposition \ref{prop-urs-lambdap} that $\K_n$ contains at least one of $\left\{1\right\}$, or $\Gamma_{p, n+1}'$, or the stabilizer URS associated to the action of $\Gamma_{p, n+1}'$ on $p^{-n} \Z_p$. In terms of $\H_n$, this implies that $\H_n$ contains $U_n$, or $G_n$, or $(G_n)_{x_n}^0$ with $x_n = p^{-n}$ (recall that the notation $G_x^0$ has been defined before Proposition \ref{prop-urs-lambdap}). 

\begin{lem} \label{lem-fat-stab-conv-G}
Let $x_n \in \Q_p$ such that $v_p(x_n) \rightarrow - \infty$. Then $(G_n)_{x_n}^0 \rightarrow G$ in $\sub(G)$.
\end{lem}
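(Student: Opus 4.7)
The Chabauty convergence $(G_n)_{x_n}^0 \to G$ has a trivial direction: every cluster point of a sequence of closed subgroups of $G$ automatically lies in $G$. So the task reduces to showing that every $g \in G$ is a limit of elements $h_n \in (G_n)_{x_n}^0$ in the topology of $G$, whose basic neighborhoods of $g$ are of the form $g\prod_{k\geq N}F_k$. The strategy is to start from $g$ itself and excise exactly the single factor of $g$ that obstructs membership in $(G_n)_{x_n}^0$.

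Concretely, I would fix $g\in G$, choose $m$ with $g\in G_m=\Gamma_{p,m}'\times U_m$, and write $g=g_1g_2$ with $g_1\in\Gamma_{p,m}'$ (supported in $p^{-m}\Z_p$) and $g_2=(f_k)_{k\ge m}\in U_m=\prod_{k\ge m}F_k$. Since $v_p(x_n)\to-\infty$, for $n$ large the integer $k_n:=-v_p(x_n)-1$ satisfies $k_n\ge m$ and tends to $+\infty$, and $x_n\in\X_{k_n}$. Let $\tilde g_2$ be obtained from $g_2$ by replacing its $F_{k_n}$-coordinate $f_{k_n}$ with the identity, and set $h_n:=g_1\tilde g_2$ (for the finitely many small $n$ not covered above, take $h_n=1$, which lies in $(G_n)_{x_n}^0$ trivially).

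Three routine verifications remain. First, $h_n$ fixes pointwise the clopen neighborhood $\X_{k_n}$ of $x_n$, since $g_1$ is supported in $p^{-m}\Z_p$ (disjoint from $\X_{k_n}$) and every surviving coordinate of $\tilde g_2$ is supported in some $\X_k$ with $k\neq k_n$. Second, $h_n\in G_n$ for $n\geq m$: clearly $g_1\in\Gamma_{p,m}'\le\Gamma_{p,n}'$, and $\tilde g_2$ splits as the product of its coordinates at indices $m\leq k\leq n-1$ (each in $F_k\le \Gamma_p'\cap\Gamma_{p,n}=\Gamma_{p,n}'$, using the hypothesis $F_k\le\Gamma_p'$ together with $\X_k\subset p^{-n}\Z_p$ for $k\leq n-1$ and the already-established identity $\Gamma_{p,n}\cap\Gamma_p'=\Gamma_{p,n}'$) times its coordinates at indices $\geq n$ (which together form an element of $U_n$). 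Third, the $f_k$ with $k\geq m$ commute because their supports are disjoint, so
\[g^{-1}h_n = g_2^{-1}\tilde g_2 = f_{k_n}^{-1}\in F_{k_n}\subset \prod_{k\geq k_n}F_k,\]
and since $k_n\to\infty$, this forces $h_n\to g$ in the topology of $G$.

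There is no deep conceptual obstacle here; the argument is essentially combinatorial bookkeeping about supports. The one place that demands genuine care—and where a misstep is easiest—is the second verification: it is precisely the assumption $F_k\le\Gamma_p'$ of Theorem \ref{thm-homeo-Qp} that ensures the ``head'' of $\tilde g_2$ lands inside $\Gamma_{p,n}'$ rather than merely inside $\Gamma_{p,n}$, and without this the element $h_n$ would not be known to lie in the subgroup $G_n$ of $G_{\F}$.
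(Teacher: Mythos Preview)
Your proof is correct and follows essentially the same strategy as the paper's: given $g\in G$, manufacture an approximating element by modifying $g$ so that it acts trivially near $x_n$, and check that the modification lies in $G_n$ and tends to $g$. The only cosmetic difference is that the paper truncates $g$ to the ball $p^{v_p(x_n)+1}\Z_p$ (i.e.\ deletes the entire tail $(f_k)_{k\geq -v_p(x_n)-1}$ of $g_2$), whereas you delete only the single coordinate $f_{-v_p(x_n)-1}$; both choices produce an element of $(G_n)_{x_n}^0$ differing from $g$ by something in $\prod_{k\geq -v_p(x_n)-1}F_k$, hence converging to $g$.
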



\begin{proof}
Write $k_n = v_p(x_n)$. Given $g \in G$, we define an element $g_n$ by declaring that $g_n$ acts on $p^{k_n+1} \Z_p$ like $g$, and trivially elsewhere. Since $g$ preserves $p^{k_n+1} \Z_p$ eventually, the element $g_n$ is well defined, $g_n \in (G_n)_{x_n}^0$ and $g g_n^{-1} \in \prod_{k\geq-k_n-1} F_k$. Therefore $(g_n)$ converges to $g$, and the lemma is proved.
\end{proof}

Now assume that $(\H_{n_k})$ is a sequence of closed $G_{n_k}$-invariant subsets saturated relatively to $G_{n_k} / U_{n_k}$, such that $(\H_{n_k})$ converges to $\H$ in $\F(\sub(G))$. If $I_1,I_2,I_3$ are respectively the set of integers $k$ such that $U_{n_k} \in \H_{n_k}$, $G_{n_k} \in \H_{n_k}$ and $(G_{n_k})_{x_{n_k}}^0 \in \H_{n_k}$; then according to the previous paragraph at least one of $I_1,I_2,I_3$ is infinite. Since $U_n \rightarrow \left\{1\right\}$, $G_n \rightarrow G$ and $(G_n)_{x_n}^0 \rightarrow G$ by Lemma \ref {lem-fat-stab-conv-G}, we deduce that $\H$ must contain at least one of $\left\{1\right\}$ or $G$. Therefore we are in position to apply Theorem \ref{thm:no-urs-limit}, which shows that $G$ has no non-trivial URS's.
\end{proof}

\section{More examples} \label{s:more}
 
\subsection{Locally elliptic groups} \label{subsec:permutations}

Our goal in this paragraph is to provide other (and maybe more tractable) examples of non-discrete groups with no non-trivial URS's. These are based on a variation of a construction independently due to Willis \cite[Sec.\ 3]{Willis-co} and Akin--Glasner--Weiss \cite[Sec.\ 4]{A-G-W}, and also considered by Caprace--Cornulier in \cite{Cap-Cor}. As in \cite{Willis-co,A-G-W}, the groups will be defined as groups of permutations which are prescribed on certain blocks; the difference here being that we force the fixed point set of every permutation to be infinite.

The construction goes as follows. Let $k_n$ be a strictly increasing sequence of natural numbers, with $k_0=0$. For all $n\geq 0$, choose a finite group $D_n\le \alt(\{k_n,\ldots, k_{n+1}-1\})$, and see the compact group $\prod_{n\geq 0} D_n$ as a permutation group on $\Z$ acting on $\Z_{\geq 0}$ on each interval $[k_n, k_{n+1}-1]$ via $D_n$, and acting trivially on $\Z_{< 0}$. Consider the group $G$ of permutations of $\Z$ generated by $\alt_f(\Z)$ (the group of finitary even permutations of $\Z$) and $\prod D_n$. Equivalently, $G$ can be described as the group of permutations moving only finitely many negative integers, and acting on all but finitely many intervals $[k_n, k_{n+1}-1]$ like an element of $D_n$. Since every finitary permutation centralizes all but finitely many $D_n$, the group $G$ carries a locally compact group topology for which the inclusion $\prod D_n \hookrightarrow G$ is a homeomorphism onto its image. 


\begin{thm} \label{thm-permu-LN-urs}
For every sequence $(D_n)$, the following hold:
\begin{enumerate}[label=(\roman*)]
\item \label{item-permu-no-urs} $G$ has no non-trivial URS's;
\item \label{item-permu-no-cofinite} $G$ has no proper subgroup of finite covolume.
\end{enumerate}
\end{thm}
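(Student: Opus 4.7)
The plan is to apply the bi-approximation machinery of Section~\ref{s:as-limits} to reduce both statements to the corresponding facts for the discrete finitary alternating group $\alt_f(\N)$, whose IRSs and URSs are known to be trivial.

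For the setup, set $U_n=\prod_{k\geq n} D_k$ and $A_n=\alt_f(\{i\in\Z : i<k_n\})$. Since the supports of $A_n$ and $U_n$ are disjoint in $\Z$, they commute, and $G_n:=A_n\cdot U_n=A_n\times U_n$ is a subgroup of $G$. Routine verifications show that $(G_n,U_n)$ is a bi-approximation in the sense of Definition~\ref{def-bi-app}: the subgroup $G_n$ is open (it contains the compact open $U_n$); $(G_n)$ is increasing (since each $D_n$ is finitary and even, one has $D_n\subset A_{n+1}$, and consequently $U_n=D_n\cdot U_{n+1}\subset G_{n+1}$); $(G_n)$ exhausts $G$ (every $g\in G$ decomposes as a finitary even permutation times an element of $\prod_{k\geq 0} D_k$, and both factors land in $G_n$ once $n$ is large enough); and $U_n\to\{1\}$ by Remark~\ref{rmq-case-Un-monot}. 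Moreover $U_n\lhd G_n$ with quotient $G_n/U_n\cong A_n\cong \alt_f(\N)$.

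For \ref{item-permu-no-urs}, the key input is that the discrete group $\alt_f(\N)$ has no non-trivial URS. The reason is that for any proper nontrivial $H\le \alt_f(\N)$ one can produce conjugating sequences $\sigma_n\in\alt_f(\N)$ such that $\sigma_n H\sigma_n^{-1}$ Chabauty-converges to $\{1\}$ or to $\alt_f(\N)$, obtained respectively by pushing the support of $H$ to infinity, or by spreading arbitrarily large finite alternating subgroups contained in $H$ across $\N$. Hence any URS of $\alt_f(\N)$ containing a proper nontrivial subgroup must already contain a trivial URS, and by minimality coincides with it. Given this, the last assertion of Theorem~\ref{thm:no-urs-limit} applied to $(G_n, U_n)$ yields \ref{item-permu-no-urs}.

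For \ref{item-permu-no-cofinite}, the analogous triviality of IRSs for $\alt_f(\N)$ is classical (following e.g.\ from the character-theoretic arguments invoked in \cite{Dud-Med}), so Corollary~\ref{c:no-irs-limit-normal} implies that $G$ itself has no non-trivial IRS. If $H\le G$ is a closed subgroup of finite covolume, the push-forward of the $G$-invariant probability measure on $G/H$ along $gH\mapsto gHg^{-1}$ is an IRS supported on conjugates of $H$, and its triviality forces $H\in\{\{1\},G\}$. The possibility $H=\{1\}$ is excluded because $G$ is non-compact: the compact open subgroup $\prod_{k\geq 0} D_k$ has infinitely many cosets in $G$, for instance those represented by elements of $\alt_f(\Z)$ with pairwise distinct supports in $\Z_{<0}$. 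The main obstacle is precisely the URS dichotomy for $\alt_f(\N)$; the bi-approximation reduction itself is formal, and the IRS input is classical.
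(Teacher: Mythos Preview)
Your treatment of \ref{item-permu-no-urs} is fine and is exactly the paper's argument: the same bi-approximation $(G_n,U_n)$ with $G_n/U_n\cong\alt_f(\N)$, together with Proposition~\ref{prop:no-urs-alternating}, feeds directly into Theorem~\ref{thm:no-urs-limit}.

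Your proof of \ref{item-permu-no-cofinite}, however, rests on a false premise. The group $\alt_f(\N)$ \emph{does} admit non-trivial IRS's: these were classified by Vershik \cite{Ver:IRS}, and the non-trivial ones arise from random invariant partitions of $\N$. The character-theoretic method of \cite{Dud-Med} does not apply here precisely because $\alt_f(\N)$ has many extremal characters (Thoma's list). Consequently Corollary~\ref{c:no-irs-limit-normal} gives no information, and indeed the paper explicitly remarks that $G$ itself \emph{has} non-trivial IRS's. So the implication ``finite covolume $\Rightarrow$ IRS $\Rightarrow$ $H\in\{\{1\},G\}$'' breaks down at the last step.

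The paper argues \ref{item-permu-no-cofinite} directly, without any IRS input: if $H\le G$ is closed of finite covolume, then $H\cap G_n$ has finite covolume in $G_n$; pushing down to the discrete quotient $G_n/U_n\cong\alt_f(\Z_{<k_n})$ one gets a finite-index subgroup of an infinite simple group, hence the whole quotient. Thus $G=HU_n$ for every $n$, so $H$ is dense, and being closed, $H=G$. The point is that absence of \emph{lattices} (or finite-covolume subgroups) is strictly weaker than absence of IRS's, and only the former holds for these groups.
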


Recall that a if $H$ is a closed subgroup of a locally compact group $G$, we say that $H$ has finite covolume if $G/H$ carries a $G$-invariant probability measure.

Before giving the proof of Theorem \ref{thm-permu-LN-urs}, let us make several observations:

\begin{remark}
\begin{enumerate}[label=(\roman*)]
\item In contrast with the groups considered in Section \ref{s:PL}, the group $G$ does admit non-trivial IRS's, that arise from random invariant partitions of $\Z$ (as in the case of the alternating group \cite{Ver:IRS}).
\item A common feature between these groups and the ones from Section \ref{s:PL} is that every single element normalizes a compact open subgroup. Equivalently, Willis' scale function is identically equal to one \cite{Wil94}. However an important difference is that here $G$ is locally elliptic (i.e.\ every finite subset generates a relatively compact subgroup), while the groups from Definition \ref{def-G_F} have plenty of infinite finitely generated discrete subgroups. 
\end{enumerate} 
\end{remark}

As in the previous section, the proof of the absence of URS's in $G$ will appeal to auxiliary results about discrete groups, namely:

\begin{prop} \label{prop:no-urs-alternating}
For an infinite set $\Omega$, the group $\alt_f(\Omega)$ does not admit non-trivial URS's.
\end{prop}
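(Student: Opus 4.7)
My plan is to invoke the Matte Bon--Tsankov realization theorem \cite{MB-Ts-realizing}, according to which every URS of a countable group $G$ arises as the stabilizer URS of some minimal action of $G$ on a compact Hausdorff space. It therefore suffices to show that every non-trivial minimal action of $G := \alt_f(\Omega)$ on a compact space is topologically free; I will argue this by contradiction.

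So let $G \curvearrowright X$ be a minimal, non-trivial action on a compact space. Since $G$ is infinite and simple, any action on a finite set is trivial; hence $X$ is infinite, and by minimality perfect. Suppose for contradiction that the action is not topologically free: some $g \in G \setminus \{1\}$ has $X^g$ with non-empty interior $U$. Let $S := \supp(g) \subset \Omega$, a finite subset, and consider the subgroup $H := \alt_f(\Omega \setminus S)$. Elements of $H$ have support disjoint from $S$, hence commute with $g$; thus $H$ preserves $X^g$, and the open set $HU \subseteq X^g$ is $H$-invariant. By minimality of $G \curvearrowright X$, there exist $k_1, \ldots, k_n \in G$ with $X = \bigcup_{i=1}^n k_i U$, so every point of $X$ is fixed by one of the conjugates $g_i := k_i g k_i^{-1}$. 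Setting $T := \bigcup_i \supp(g_i)$, the cofinite subgroup $H' := \alt_f(\Omega \setminus T)$ commutes with every $g_i$ and hence preserves every closed set $X^{g_i}$.

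The crux, and the main obstacle of the plan, is to exploit the finite closed cover $X = \bigcup_i X^{g_i}$ by $H'$-invariant sets to derive a contradiction. My plan is to extract a fixed point of $H'$ in $X$ — for instance, by applying Zorn's lemma to the family of minimal $H'$-invariant closed subsets of each $X^{g_i}$, handling with care the possibility that such a minimal set is larger than a singleton. Once $\fix_X(H') := \{x \in X : H' \cdot x = x\}$ is non-empty, it is closed and invariant under the normalizer $N_G(H')$ (which is the setwise stabilizer of $T$ in $G$); by $G$-minimality its countably many $G$-translates cover $X$, and Baire's theorem forces $\fix_X(H')$ to have non-empty interior. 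One then iterates the argument: the new, larger stabilizing element is some product or generator taken from $H'$, whose support remains inside a finite set $T' \supset T$; the centralizer $\alt_f(\Omega \setminus T')$ is still cofinite, and the fixed-point set enlarges at each step. Eventually the fixed-point set must be preserved by a group large enough to contradict minimality and non-triviality of the $G$-action, since no element of $G$ can pointwise fix an open set in a non-trivial minimal action (the set of elements of $G$ fixing an open set generates a proper subgroup whose normalizer in $G$ is itself proper).

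A potentially cleaner alternative — which I expect to be the most natural path in practice — is to invoke a criterion from \cite{LBMB-subdyn}, analogous to the Corollary 3.12 used in the proof of Proposition \ref{prop-urs-lambdap}, applied to the natural $G$-action on $\Omega$. Indeed, $G$ acts $n$-transitively on $\Omega$ for every $n$; the rigid stabilizer in $G$ of any infinite subset $V \subset \Omega$ is $\alt_f(V)$, hence simple; and point stabilizers $\alt_f(\Omega \setminus \{x\})$ are maximal subgroups of $G$ (by primitivity of the action). These are precisely the hypotheses typically required by such criteria, so an application would directly yield that the only URS's of $G$ are $\{\{1\}\}$ and $\{G\}$, bypassing the intricate iteration sketched above.
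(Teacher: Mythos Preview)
The paper does not actually prove this proposition: it quotes it from Thomas--Tucker-Drob \cite{Th-TD}, where it is deduced from Vershik's classification of the IRS's of $\alt_f(\Omega)$, and also points to the older characterisation of Sehgal--Zalesskii \cite{Se-Za-alt}. So your proposal is not being compared to an argument in the paper, but to a citation.

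That said, your own sketch has genuine gaps. In your first approach, the crucial step ``extract a fixed point of $H'$ in $X$'' is exactly where the difficulty lies, and you do not address it. Having a finite closed $H'$-invariant cover $X=\bigcup_i X^{g_i}$ gives, by Zorn, a minimal $H'$-set inside some $X^{g_i}$; but $H'\cong\alt_f(\Omega\setminus T)$ is itself isomorphic to $\alt_f(\Omega)$, and there is no reason at all for a minimal $H'$-set to be a singleton. The subsequent ``iteration'' is not well defined: you never specify what the ``new, larger stabilizing element'' is, nor why the fixed-point set strictly increases, nor how the process terminates. The final assertion that ``the set of elements of $G$ fixing an open set generates a proper subgroup whose normalizer in $G$ is itself proper'' is also unjustified (and in a simple group, a proper normal subgroup would have to be trivial, so this formulation cannot be what you mean).

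Your alternative via \cite[Cor.\ 3.12]{LBMB-subdyn} does not apply as stated. That criterion, as used in Proposition~\ref{prop-urs-lambdap}, concerns a minimal action on a compact perfect space and identifies the non-trivial URS with the stabilizer URS of that action. The natural $\alt_f(\Omega)$-action is on the \emph{discrete} set $\Omega$, which is neither compact nor perfect; there is no stabilizer URS in the sense required, and the rigid-stabilizer hypotheses of that corollary are tailored to spaces without isolated points. (Passing to the one-point compactification destroys minimality.) So this route, while suggestive, would need a substantially different statement from \cite{LBMB-subdyn} than the one invoked for $\Lambda_p'$.

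In short, neither branch of your proposal is complete, and the paper's route---quoting \cite{Th-TD} or \cite{Se-Za-alt}---remains the efficient one here.
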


Here $\alt_f(\Omega)$ is the group of finitely supported alternating permutations of $\Omega$. Proposition \ref{prop:no-urs-alternating} was proved by Thomas and Tucker-Drob {in \cite{Th-TD}} (for $\Omega$ countable), where it is deduced from Vershik's classification of the IRS's of this group \cite{Ver:IRS}. A closely related and more precise result goes back to Sehgal--Zalesskii \cite{Se-Za-alt}, who characterized the subgroups of $\alt_f(\Omega)$ whose conjugacy class avoids an open neighbourhood of $\{1\}$ in the Chabauty space $\sub(\alt_f(\Omega))$. See the equivalence (ii) $\Leftrightarrow$ (iii) from Theorem 1 in \cite{Se-Za-alt} (from which Proposition \ref{prop:no-urs-alternating} easily follows).

\begin{proof}[Proof of Theorem \ref{thm-permu-LN-urs}]
For $n \geq 1$, consider the open subgroup \[ G_n=\langle \alt_f(\Z_{< k_n})\cup \prod D_j\rangle, \] which consists of elements of $G$ acting on $[k_i, k_{i+1}-1]$ like an element of $D_i$ for all $i \geq n$. The sequence $(G_n)$ is increasing and ascends to $G$, and if we write $U_n=\prod_{j\geq n} D_j \le G_n$, then $(U_n)$ is decreasing and $\cap U_n = 1$. So $(G_n, U_n)$ is a bi-approximation of $G$. Note that $U_n$ is normal in $G_n$ and that $G_n \cong \alt_f(\Z_{< k_n}) \times U_n$, so that the quotient $G_n/U_n \cong \alt_f(\Z_{< k_n})$ has no non-trivial URS's by Proposition \ref{prop:no-urs-alternating}. Therefore statement \ref{item-permu-no-urs} follows by applying Theorem \ref{thm:no-urs-limit}.

For \ref{item-permu-no-cofinite}, let $H \leq G$ be a closed subgroup of finite covolume. Since $G_n$ is open in $G$ for all $n \geq 1$, $H \cap G_n$ has finite covolume in $G_n$. The subgroup $U_n$ being compact, we deduce that the projection of $H \cap G_n$ to $G_n / U_n$ is a closed subgroup of finite covolume. But $G_n / U_n \cong \alt_f(\Z_{< k_n})$ is a discrete infinite simple group, and hence has no proper finite index subgroups. Therefore $H \cap G_n$ surjects onto $G_n / U_n$, and since $(G_n)$ ascends to $G$ and $(U_n)$ decreases, we actually have $G = H U_n$ for arbitrary $n$. This shows that $H$ is dense in $G$. Since $H$ is also closed by assumption, we must have $H=G$.
\end{proof}
 
\subsection{Groups of infinite matrices}

A result of Kirillov \cite{Kir-char} (extended by Peterson and Thom in \cite{Pet-Thom}) says that the group $\psl(d, \Q)$ has no non-trivial characters for $d\geq 3$, and therefore no non-trivial IRS's \cite[Th.\ 2.11]{Dud-Med}, \cite[Th.\ 3.2]{Pet-Thom}:

\begin{prop}\label{prop:pet-thom}
For $d\geq 3$, the group $\psl(d, \Q)$ has no non-trivial IRS's.
\end{prop}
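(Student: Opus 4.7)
The plan is to combine the character--IRS dictionary for countable groups with Kirillov's classification of characters of $\psl(d, \Q)$, both of which are signposted in the sentence preceding the statement. I extract a character from an arbitrary IRS, apply Kirillov--Peterson--Thom to classify it, and then translate the classification back to the IRS.

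First, given any IRS $\mu \in \irs(\psl(d, \Q))$, I associate the function
\[ \varphi_\mu(g) = \mu\bigl(\{H \in \sub(\psl(d, \Q)) : g \in H\}\bigr), \qquad g \in \psl(d, \Q). \]
A routine computation shows that $\varphi_\mu$ is a \emph{character} of $\psl(d, \Q)$---that is, a conjugation-invariant, positive-definite, normalized function; this is one direction of the IRS--character correspondence (see \cite[Th.\ 2.11]{Dud-Med} or \cite[Th.\ 3.2]{Pet-Thom}). The assignment $\mu \mapsto \varphi_\mu$ is affine, and sends ergodic IRS's to extremal characters.

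Second, I invoke the theorem of Kirillov \cite{Kir-char}, in the extension due to Peterson--Thom \cite[Th.\ 3.2]{Pet-Thom}: for $d \geq 3$, the only extremal characters of $\psl(d, \Q)$ are the trivial character $\mathbf{1}$ (identically equal to $1$) and the indicator function $\mathbf{1}_{\{1\}}$ of the identity. Decomposing $\mu$ into ergodic components $\mu = \int \mu_e \, d\pi(e)$, each $\mu_e$ has extremal character, hence $\varphi_{\mu_e} \in \{\mathbf{1}, \mathbf{1}_{\{1\}}\}$.

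Finally, I read off $\mu_e$ from its character. If $\varphi_{\mu_e} = \mathbf{1}$, then $\mu_e(\{H : g \in H\}) = 1$ for every $g \in \psl(d, \Q)$, so by countability $\mu_e$-almost every $H$ contains every element of $\psl(d, \Q)$, i.e.\ $\mu_e = \delta_{\psl(d, \Q)}$. Symmetrically, $\varphi_{\mu_e} = \mathbf{1}_{\{1\}}$ forces $\mu_e = \delta_{\{1\}}$. Integrating yields that $\mu$ is a convex combination of $\delta_{\{1\}}$ and $\delta_{\psl(d, \Q)}$. Since all the substantive work---the classification of characters and the dictionary linking them to IRS's---is already in the cited literature, there is no real obstacle here; the proof is essentially the assembly indicated by the two sentences preceding the statement.
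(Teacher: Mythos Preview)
Your approach is exactly the paper's: the paper does not prove this proposition at all but simply records, in the sentence preceding it, that Kirillov \cite{Kir-char} (extended by Peterson--Thom \cite{Pet-Thom}) gives the absence of non-trivial characters, and that \cite[Th.~2.11]{Dud-Med} and \cite[Th.~3.2]{Pet-Thom} then yield the absence of non-trivial IRS's. You cite the same results and correctly identify the assembly.

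One caveat on your unpacking: the claim that $\mu\mapsto\varphi_\mu$ ``sends ergodic IRS's to extremal characters'' is not valid in general (for $G=\Z$ the ergodic IRS $\delta_{n\Z}$ yields the non-extremal character $k\mapsto \mathbf{1}_{n\mid k}$), so your deduction that $\varphi_{\mu_e}\in\{\mathbf 1,\mathbf 1_{\{1\}}\}$ rather than a proper convex combination does not follow from what you wrote. The theorems you cite prove the implication ``only trivial characters $\Rightarrow$ only trivial IRS's'' by a somewhat different route, so since you explicitly defer to them for the substantive work, the conclusion stands; just be aware that the intermediate sentence is not the actual mechanism.
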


In this paragraph we use this fact to describe additional examples of non-discrete groups without IRS's. The construction is in spirit very close to the one carried out in \cite[Prop.\ 3.5]{Willis-co}.

Let $\Q^{(\N)}$ be the infinite dimensional vector space over $\Q$ with basis $(e_i)_{i\in \N}$. Let $\sl(\infty, \Q)=\lim_d \sl(d, \Q)$ be the group of linear isomorphisms of $\Q^{(\N)}$ that fix all but finitely many $e_i$, and with determinant one. Let $(d_n)$ be a strictly increasing sequence of natural numbers, and for convenience we will assume that $d_n$ is odd for infinitely many $n$. Write $k_n = d_{n+1}-d_n$. We choose a finite subgroup $D_n\le \sl(k_n, \mathbb{Q})$ for all $n$, and view the group $\prod_{n\geq 0} D_n$ as a group of block-diagonal linear transformations of $\Q^{(\N)}$, where each $D_n$ acts on the subspace spanned by $\{e_{d_n+1}, \ldots, e_{d_{n+1}}\}$ and fixes all other elements of the basis. Finally we consider the group $G=\langle \sl(\infty, \Q)\cup \prod_{n\geq 0} D_n\rangle$, equipped with the topology for which  the inclusion of $\prod D_n$ is continuous and open.

\begin{prop} \label{prop-psl-non-discret}
For every choice of $(D_n)$, the group $G$ has no non-trivial IRS's.
\end{prop}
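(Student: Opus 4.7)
The plan is to build a bi-approximation $(G_n, U_n)$ of $G$ to which Corollary \ref{c:no-irs-limit-normal} applies. Since by assumption $d_n$ is odd for infinitely many $n$, upon extracting we may assume that \emph{every} $d_n$ is odd (and $\geq 3$); the group $G$ is unchanged by such a relabelling because extracting a cofinal subsequence of any bi-approximation still yields a bi-approximation.

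First I would set up the approximating subgroups. Let $U_n = \prod_{k \geq n} D_k$ and let $G_n$ be the subgroup generated by $\sl(d_n,\Q)$ (realized as the subgroup of $\sl(\infty,\Q)$ acting on the span of $e_1,\dots,e_{d_n}$ and fixing the rest) together with $U_n$. Because the supports are disjoint, $\sl(d_n,\Q)$ and $U_n$ commute and have trivial intersection, so $G_n \cong \sl(d_n,\Q)\times U_n$ as an internal direct product; in particular $U_n$ is a compact open normal subgroup of $G_n$ with discrete quotient $G_n/U_n \cong \sl(d_n,\Q)$.

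Next I would verify that $(G_n, U_n)$ is a bi-approximation in the sense of Definition \ref{def-bi-app}. The sequence $(U_n)$ is decreasing with trivial intersection and $\cup U_n = U_0$ is compact, so $U_n \to \{1\}$ by Remark \ref{rmq-case-Un-monot}. Each $G_n$ is open since it contains the open subgroup $U_n$. The inclusion $G_n \subseteq G_{n+1}$ follows from $\sl(d_n,\Q) \subseteq \sl(d_{n+1},\Q)$ together with the decomposition $U_n = D_n \cdot U_{n+1}$ and the fact that $D_n \subseteq \sl(d_{n+1},\Q)$. Finally $\bigcup_n G_n$ contains both $\sl(\infty,\Q) = \bigcup_n \sl(d_n,\Q)$ and $\prod_{k\geq 0} D_k = \bigcup_n U_n \cdot \text{(finite block products)}$, so it coincides with $G$.

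Since $d_n$ is odd, $\sl(d_n,\Q)$ has trivial center and thus equals $\psl(d_n,\Q)$; because $d_n \geq 3$, Proposition \ref{prop:pet-thom} implies that $G_n/U_n$ has no non-trivial IRS's for every $n$. Corollary \ref{c:no-irs-limit-normal} then yields that $G$ has no non-trivial IRS's. There is no real obstacle here: the substance is packaged into Proposition \ref{prop:pet-thom} and into the general reduction machinery of Section \ref{s:as-limits}; the only point worth being careful about is the parity condition, which is precisely what lets $G_n/U_n$ be centerless and lets the theorem of Kirillov--Peterson--Thom apply as stated.
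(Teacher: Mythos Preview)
Your proof is correct and follows essentially the same route as the paper's: build the bi-approximation $(G_n,U_n)$ with $G_n=\langle \sl(d_n,\Q)\cup U_n\rangle\cong \sl(d_n,\Q)\times U_n$, pass to the subsequence with $d_n$ odd (so that $G_n/U_n\cong\psl(d_n,\Q)$), and invoke Proposition~\ref{prop:pet-thom} together with Corollary~\ref{c:no-irs-limit-normal}. The only cosmetic difference is that the paper takes $G_n$ to be generated by $\sl(d_n,\Q)$ and the full product $\prod_{j\ge 0}D_j$, which coincides with your $G_n$ since $D_j\le\sl(d_n,\Q)$ for $j<n$.
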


We note however that we do not know whether these groups admit non-trivial URS's. The proof of Proposition \ref{prop-psl-non-discret} will again follow the same scheme, which consists in applying Corollary \ref{c:no-irs-limit-normal} to a suitable bi-approximation of $G$.

\begin{proof}[Proof of Proposition \ref{prop-psl-non-discret}]
If we denote by $G_n$ the subgroup of $G$ generated by $\sl(d_n, \Q)$ and $\prod_{j\geq 0} D_j$, and by $U_n=\prod_{j\geq n} D_j \leq G_n$, we easily verify that $(G_n, U_n)$ forms a bi-approximation of the group $G$. Moreover $U_n$ is normal in $G_n$ and $G_n/U_n \cong \sl(d_n , \Q)$.  Since $d_n$ is odd for infinitely many $n$, up to taking a subsequence, $G_n/U_n$ has no nontrivial IRS's by Proposition \ref{prop:pet-thom}. Therefore the conclusion of Proposition \ref{prop-psl-non-discret} follows by applying Corollary \ref{c:no-irs-limit-normal}.
\end{proof}

\bibliographystyle{amsalpha}
\bibliography{bib-no-urs}

\end{document}